\def\CC{\mathbb{C}}
\def\NN{\mathbb{N}}
\def\RR{\mathbb{R}}
\def\ZZ{\mathbb{Z}}
\providecommand{\abs}[1]{{\lvert#1\rvert}}
\providecommand{\floor}[1]{{\lfloor#1\rfloor}}
\providecommand{\bigfloor}[1]{{\bigl\lfloor#1\bigr\rfloor}}
\providecommand{\ceil}[1]{{\lceil#1\rceil}}
\providecommand{\bigceil}[1]{{\bigl\lceil#1\bigr\rceil}}
\providecommand{\ideal}[1]{{\langle#1\rangle}}
\providecommand{\bigideal}[1]{{\bigl\langle#1\bigr\rangle}}
\def\dG{m}     % degree of polynomial generation
\def\dR{m'}    % degree of polynomial reproduction
\newcommand\balpha{{\bm{\alpha}}}
\newcommand\bbeta{{\bm{\beta}}}
\newcommand\bgamma{{\bm{\gamma}}}
\newcommand\bdelta{{\bm{\delta}}}
\newcommand\bsigma{{\bm{\sigma}}}
\newcommand\bpi{{\bm{\pi}}}
\newcommand\bzero{{\bm{0}}}
\newcommand\bk{\bm{k}}
\newcommand\bz{\bm{z}}
\def\cI{\mathcal{I}}
\def\cJ{\mathcal{J}}
\newtheorem{theorem}{Theorem}[section]
\newtheorem{proposition}[theorem]{Proposition}
\newtheorem{definition}[theorem]{Definition}
\newtheorem{example}[theorem]{Example}
\newtheorem{remark}[theorem]{Remark}
\newcommand{\rev}[1]{{#1}}
\newcommand{\support}[3][]{
\setlength{\unitlength}{2.5ex}
\text{\small\parbox[c]{4\unitlength}{\centering
\ifthenelse{\equal{#1}{}}{
\begin{picture}(3,3)(0,0)
  \put(0,0){\includegraphics[width=3\unitlength]{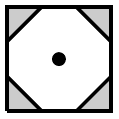}}
  \put(-0.15,2.5){\makebox(0,0)[cr]{$#3$}}
  \put(3.15,1.5){\makebox(0,0)[cl]{$#2$}}
\end{picture}}{
\begin{picture}(3,4)(0,-1)
  \put(0,0){\includegraphics[width=3\unitlength]{support2}}
  \put(-0.15,2.5){\makebox(0,0)[cr]{$#3$}}
  \put(3.15,1.5){\makebox(0,0)[cl]{$#2$}}
  \put(1.5,-0.2){\makebox(0,0)[tc]{$#1$}}
\end{picture}}}}}
\begin{document}

\title{Symmetric four-directional bivariate pseudo-splines}
\author{Costanza Conti \and Chongyang Deng \and Kai Hormann}
\date{}
\maketitle

%-------------------------------------------------------------------------

\begin{abstract}
Univariate pseudo-splines are a generalization of uniform B-splines and interpolatory $2n$-point subdivision schemes. Each pseudo-spline is characterized as the subdivision scheme with least possible support among all schemes with specific degrees of polynomial generation and reproduction. In this paper we consider the problem of constructing the symbols of the bivariate counterpart and provide a formula for the symbols of a family of symmetric four-directional bivariate pseudo-splines. All methods employed  are of purely algebraic nature.
\end{abstract}

%-------------------------------------------------------------------------

\section{Introduction}\label{sec:introduction}
Subdivision schemes are efficient iterative tools for generating curves, surfaces, wavelets, and frame constructions used in many fields ranging from computer aided geometric design to signal and image processing. Important properties of subdivision schemes such as convergence, regularity, polynomial generation, and approximation order have been studied intensively in the last twenty years, often by the help of the so-called \emph{subdivision symbol}, a Laurent polynomial associated with the subdivision scheme (see the surveys~\cite{Cavaretta:1991:SS} and~\cite{Dyn:2002:SSI} and the references therein). Indeed, the algebraic properties of the symbol translate directly into analytical properties of the corresponding subdivision scheme and its limit~\cite{Dyn:2008:PRB,Charina:2011:SMS,Conti:2011:PRF,Conti:2013:ACA}.

In this paper we are particularly interested in the properties of \emph{polynomial generation} and \emph{polynomial reproduction}. The former is the capability of a subdivision scheme to generate a certain space of polynomials in the limit, and the latter is the capability to produce exactly the same polynomial from which the initial data is sampled. Both properties are very important, because a high generation degree indicates a potentially high regularity of the scheme, while a high reproduction degree implies a high approximation order~\cite{Jia:2002:APO,Jia:2003:SAO,Levin:2003:PGA}. In the univariate case, the extreme cases are given by B-splines and the interpolatory $2n$-point schemes~\cite{Deslauriers:1989:SII}. While B-splines have the highest possible smoothness for a given support size but poor approximation order, the limit functions of the interpolatory schemes have optimal approximation order but low smoothness.

In the univariate binary setting, the generation and reproduction degrees are closely related to the behaviour of the subdivision symbol and its derivatives at $z=1$ and $z=-1$. For example, it is well known that a subdivision scheme with symbol $a(z)$ generates and reproduces constant functions, if $a(1)=2$ and $a(-1)=0$, which in turn is a necessary condition for the convergence of the scheme. Regarding higher degrees of generation and reproduction, Cavaretta et al.~\cite{Cavaretta:1991:SS} show that a convergent scheme \emph{generates} polynomials up to degree $\dG\ge1$ if $a^{(k)}(-1)=0$ for $k=0,\dots,\dG$, which is equivalent to the fact that the symbol can be written as $a(z)=(1+z)^{\dG+1}b(z)$. Moreover, Dyn et al.~\cite{Dyn:2008:PRB} prove that a convergent primal scheme further \emph{reproduces} polynomials up to degree $\dR\ge1$ with $\dR\le\dG$, if $a(z)=2+(1-z)^{\dR+1}c(z)$, or equivalently, $a^{(k)}(1)=0$ for $k=1,\dots,\dR$. Similar algebraic conditions exist in the bivariate setting, and we review them in Section~\ref{sec:preliminaries}.

Primal pseudo-splines with symbols
\begin{equation}\label{eq:univariate-pseudo-spline}
  u^l_n(z) = 2 {\sigma(z)}^n \sum_{i=0}^l \binom{n+i-1}{i} {\delta(z)}^i,\qquad
  0 \le l < n,
\end{equation}
where
\begin{equation}\label{eq:sigma-delta}
  \sigma(z) =  \frac{{(1+z)}^2}{4z},  \qquad\qquad
  \delta(z) = -\frac{{(1-z)}^2}{4z},
\end{equation}
were discovered by Dong and Shen~\cite{Dong:2007:PWA} and form a family of subdivision schemes that neatly fills the gap between the odd degree B-splines with symbols $u^0_n(z)$ and the interpolatory $2n$-point schemes by Deslauriers and Dubuc~\cite{Deslauriers:1989:SII} with symbols $u^{n-1}_n(z)$.
It follows directly from~\eqref{eq:univariate-pseudo-spline} that the pseudo-spline $u^l_n$ generates polynomials up to degree $2n-1$, and
\rev{Dyn et al.~\cite[Section~6]{Dyn:2008:PRB} show that $u^l_n(z)$} can be rewritten as
%, after rewriting the symbol as
\begin{equation}\label{eq:univariate-pseudo-spline-alt}
  u^l_n(z) = 2 - 2 {\delta(z)}^{l+1} \sum_{i=1}^n \binom{n+l}{i+l} {\delta(z)}^{i-1}{\sigma(z)}^{n-i},
\end{equation}
\rev{from which it is straightforward to see}
that it reproduces polynomials up to degree $2l+1$. %~\cite[Theorem 6.1]{Dyn:2008:PRB}.
Moreover, $u^l_n(z)$ is the symbol with minimal support among all symmetric symbols that share these degrees of polynomial generation and reproduction.

The goal of this paper is to generalize the concept of univariate pseudo-splines to the bivariate setting. We provide explicit formulas for the symbols $a^l_n(\bz)$, $0\le l<n$ of symmetric four-directional bivariate pseudo-splines and prove that they satisfy the algebraic properties of polynomial generation and reproduction up to degree $2n-1$ and $2l+1$, respectively. The analysis of further properties like convergence, smoothness, stability, and non-singularity will be given in detail in a forthcoming paper.

After briefly discussing the special case of tensor product pseudo-splines in Section~\ref{sec:TP-pseudo-splines}, we first turn our attention to a reasonable four-directional bivariate analogue of univariate B-splines in Section~\ref{sec:box-splines} and show that the pseudo-splines with generation degree $2n-1$ and linear reproduction are scaled four-directional box-splines. In Section~\ref{sec:interpolatory}, we then consider the analogue of the univariate Dubuc--Delauriers schemes over the four-directional grid and provide an explicit formula for the minimally supported interpolatory schemes with generation and reproduction degree $2n-1$ that were discovered by Han and Jia~\cite{Han:1998:OIS}. We finally introduce and analyse a complete family of bivariate pseudo-splines in Section~\ref{sec:pseudo-splines}, provide some examples in Section~\ref{sec:examples}, and conclude with some observations concerning the uniqueness of this family in Section~\ref{sec:conclusion}.

%-------------------------------------------------------------------------

\section{Preliminaries}\label{sec:preliminaries}
The \emph{symbol} of the bivariate subdivision scheme, defined by the finitely supported \emph{subdivision mask} $A=\{a_\balpha\in\RR:\balpha\in\ZZ^2\}$, is given by the Laurent polynomial
\[
  a(\bz) = \sum_{\balpha\in\ZZ^2} a_\balpha \bz^\balpha, \qquad
  \bz = (z_1,z_2) \in {(\CC\setminus\{0\})}^2,
\]
where $\bz^\balpha=z_1^{\alpha_1}z_2^{\alpha_2}$ for $\balpha=(\alpha_1,\alpha_2)\in\ZZ^2$. \rev{In the four-directional setting that we consider, the symbol is called \emph{symmetric} if
\[
  a(z_1,z_2) = a(1/z_1,z_2) = a(z_1,1/z_2) = a(z_2,z_1),
\]
or, in terms of the mask coefficients,
\[
  a_{(\alpha_1,\alpha_2)}
  = a_{(-\alpha_1,\alpha_2)}
  = a_{(\alpha_1,-\alpha_2)}
  = a_{(\alpha_2,\alpha_1)}.
\]
The \emph{support} of the mask, the symbol, and the scheme is defined as the convex hull of the set $\{ \balpha\in\ZZ_2 : a_{\balpha} \neq \bzero\}$, and the \emph{size} of the support is the area of this convex hull.}

Similar to the univariate setting, the generation and reproduction degrees are closely related to the behaviour of the symbol and its derivatives at $\bz\in E$, where $E=\{(1,1),(-1,1),(1,-1),(-1,-1)\}$. For example, the generation and reproduction of constant functions is guaranteed, if $a(1,1)=4$ and $a(\bz)=0$ for $\bz\in E'$, where $E'=E\setminus\{(1,1)\}$, which is again a necessary condition for the convergence of the scheme. Regarding higher degrees of generation and reproduction, Cavaretta et al.~\cite{Cavaretta:1991:SS} show that a convergent bivariate scheme \emph{generates} polynomials up to degree $\dG\ge1$, if
\begin{equation}\label{eq:PG-condition}
  (D^{\bk}a)(\bz) = 0, \qquad
  \bz \in E', \quad
  \bk \in \NN_0^2, \quad
  0 \le \abs{\bk} \le \dG,
\end{equation}
which is also known as the \emph{sum rule of order $\dG+1$}. Moreover, Charina et al.~\cite{Charina:2013:PRO} prove that a convergent primal scheme \rev{that generates polynomials up to degree $m$} further \emph{reproduces} polynomials up to degree $\dR\ge1$ with $\dR\le\dG$, if
\begin{equation}\label{eq:PR-condition}
  (D^{\bk}a)(1,1) = 0, \qquad
  \bk \in \NN_0^2, \quad
  0 < \abs{\bk} \le \dR.
\end{equation}

While the corresponding conditions in the univariate setting have equivalent \emph{factorization} properties, the bivariate analogue can be described in terms of ideals, leading to equivalent \emph{decomposition} properties. Sauer~\cite{Sauer:2002:PII} shows that
\begin{equation}\label{eq:J}
  \cJ_k = \bigideal{1-\bz^2}^k
        = \bigideal{{(1-z_1^2)}^{\alpha_1} {(1-z_2^2)}^{\alpha_2} : \balpha\in\NN_0^2, \abs{\balpha}=k}, \qquad
  k \ge 1,
\end{equation}
is the ideal of all bivariate polynomials which satisfy
\[
  (D^{\bk}a)(\bz) = 0, \qquad
  \bz \in E, \quad
  \bk \in \NN_0^2, \quad
  0 \le \abs{\bk} < k,
\]
and that the bivariate polynomials which satisfy only~\eqref{eq:PG-condition} for $\dG=k-1$ belong to the quotient ideal
\begin{equation}\label{eq:I}
  \cI_k = \cJ_k : \ideal{1-\bz}^k, \qquad
  k \ge 1.
\end{equation}
Consequently, a convergent scheme with symbol $a\in\cI_k$ generates polynomials up to degree $k-1$. However, $a\in\cJ_k$ does not imply polynomial reproduction of degree $k-1$, because $a(1,1)=0$ in this case, and hence such a scheme is not even convergent~\cite{Dyn:2002:SSI}. But if $a$ reproduces polynomials up to degree $k-1$ and $b\in\cJ_k$, then the reproduction degree of $a+b$ is also $k-1$. Note that the indices of our versions of $\cI_k$ in~\eqref{eq:I} and $\cJ_k$ in~\eqref{eq:J} are shifted by one with respect to those in~\cite{Sauer:2002:PII} for convenience, so that
\[
  a \in \cI_k, \quad b \in \cI_l \qquad\Longrightarrow\qquad a \cdot b \in \cI_{k+l},
\]
and similarly for $\cJ_k$.

Now, in order to check the generation degree of a symbol, we recall from Charina et al.~\cite{Charina:2011:SMS}, \rev{that the \emph{four-directional box-spline} ${\cal B}_{2i,2j,k,k}$ with symbol\footnote{\rev{Note that we use the notation $B_{i,j,k}$ for convenience, and it must not be confused with the symbol of the three-directional box-spline ${\cal B}_{i,j,k}$.}}}
\begin{equation}\label{eq:box-spline}
  B_{i,j,k}(\bz) =
    \biggl(\frac{(1+z_1)^2}{4z_1}\biggr)^i
    \biggl(\frac{(1+z_2)^2}{4z_2}\biggr)^j
    \biggl(\frac{(1+z_1z_2)(z_1+z_2)}{4z_1z_2}\biggr)^k
\end{equation}
is contained in $\cI_{2\dG}$, where $\dG=i+j+k-\max(i,j,k)$. Therefore, if the symbol of a scheme can be written as
\begin{equation}\label{eq:PG-decomposition}
  a(\bz) = \sum_{n=1}^N B_{i_n,j_n,k_n}(\bz) b_n(\bz)
\end{equation}
with $i_n+j_n+k_n-\max(i_n,j_n,k_n)\ge\dG$, $n=1,\dots,N$, for some suitable bivariate symbols $b_n$, then $a\in\cI_{2\dG}$ and hence the scheme generates polynomials up to degree $2\dG-1$. The scheme further reproduces polynomials up to degree $2\dR-1$ with $\dR\le\dG$, if the symbol can be decomposed as
\begin{equation}\label{eq:PR-decomposition}
  a(\bz) = 4 + \sum_{n=1}^{N'} {\delta(z_1)}^{\alpha_n} {\delta(z_2)}^{\beta_n} c_n(\bz)
\end{equation}
with $\alpha_n+\beta_n\ge\dR$, $n=1,\dots,N'$, for some suitable bivariate symbols $c_n$.

\begin{example}\label{example:cubic-reproduction-scheme}
The symmetric subdivision scheme with mask
\[
  A = \frac{1}{32}
    \begin{bmatrix}
    0 & 0 & -1 & -2 & -1 & 0 & 0\\
    0 & -2 & 0 & 4 & 0 & -2 & 0\\
    -1 & 0 & 10 & 18 & 10 & 0 & -1\\
    -2 & 4 & 18 & 24 & 18 & 4 & -2\\
    -1 & 0 & 10 & 18 & 10 & 0 & -1\\
    0 & -2 & 0 & 4 & 0 & -2 & 0\\
    0 & 0 & -1 & -2 & -1 & 0 & 0
    \end{bmatrix}
\]
and symbol
\begin{equation}\label{eq:example-scheme}
  a(\bz) = 12 B_{1,1,1}(\bz) - 8 B_{1,1,2}(\bz)
\end{equation}
generates polynomials up to degree $3$. It also reproduces polynomials up to the same degree, because
\[
  a(\bz) = 4 - 4 {\delta(z_1)}^2 ( B_{0,1,0} + 2 B_{1,1,0} )
             - 4 \delta(z_1) \delta(z_2) ( 1 + 4 B_{1,1,0} )
             - 4 {\delta(z_2)}^2 ( B_{1,0,0} + 2 B_{1,1,0} ).
\]
\end{example}

\begin{example}\label{example:cubic-reproduction-family}
Since the symbol
\[
  b(\bz) = B_{2,2,0}(\bz) - B_{1,1,1}(\bz) = \delta(z_1^2) \delta(z_2^2)/16
\]
is contained in $\cJ_4$ by~\eqref{eq:J}, and therefore satisfies~\eqref{eq:PG-condition} and~\eqref{eq:PR-condition} for $\dG=\dR=3$, we can add any multiple of it to the symbol in~\eqref{eq:example-scheme} without changing the generation and reproduction degree. Therefore, the symmetric subdivision scheme with mask
\[
  A_\mu = \frac{1}{32}
    \begin{bmatrix}
    0 & 0 & -1 & -2 & -1 & 0 & 0\\
    0 & \mu & 0 & -2\mu & 0 & \mu & 0\\
    -1 & 0 & 10 & 18 & 10 & 0 & -1\\
    -2 & -2\mu & 18 & 32+4\mu & 18 & -2\mu & -2\\
    -1 & 0 & 10 & 18 & 10 & 0 & -1\\
    0 & \mu & 0 & -2\mu & 0 & \mu & 0\\
    0 & 0 & -1 & -2 & -1 & 0 & 0
    \end{bmatrix}
\]
and symbol
\[
  a_\mu(\bz)
  = a(\bz) + 8(2+\mu) b(\bz)
  = 8(2+\mu) B_{2,2,0}(\bz) - 4(1+2\mu) B_{1,1,1}(\bz) - 8 B_{1,1,2}(\bz)
\]
generates and reproduces polynomials up to degree 3 for any $\mu\in\RR$.
\end{example}

\begin{remark}\label{remark:non-unique}
Besides illustrating the concepts of polynomial generation and reproduction, the examples above also show an important difference between the univariate and the bivariate four-directional setting. As we will see in Section~\ref{sec:interpolatory}, the scheme $a_\mu$ for $\mu=0$ is the four-directional analogue of the interpolatory 4-point scheme, and it was proven by Han and Jia~\cite{Han:1998:OIS} that its support is minimal \rev{(with respect to its size)}. However, while the interpolatory $2n$-point schemes are the \emph{unique} schemes with generation and reproduction degree $2n-1$ and minimal support in the univariate setting, Example~\ref{example:cubic-reproduction-family} shows that, at least in the case $n=2$, the four-directional setting admits a whole family of schemes $a_\mu$ which generate and reproduce polynomials up to degree $2n-1$ and are minimally supported. We shall come back to this observation in Section~\ref{sec:conclusion}.
\end{remark}

\subsection{Notation}\label{sec:notation}
In what follows it will be useful to define the bivariate analogues of $\sigma$ and $\delta$ in~\eqref{eq:sigma-delta}, their difference, and their product as
\[
  \bsigma(\bz) = \sigma(z_1) \sigma(z_2), \qquad
  \bdelta(\bz) = \delta(z_1) \delta(z_2), \qquad
  \bgamma(\bz) = \bsigma(\bz) - \bdelta(\bz), \qquad
  \bpi(\bz) = \bsigma(\bz) \bdelta(\bz).
\]
We further introduce the notation
\[
  {\bpi(\bz)}^\balpha = \bigl( \sigma(z_1) \delta(z_1) \bigr)^{\alpha_1}
                        \bigl( \sigma(z_2) \delta(z_2) \bigr)^{\alpha_2}
\]
and note that
\begin{equation}\label{eq:bpi^balpha}
  {\bpi(\bz)}^\balpha
  = \frac{{\delta(z_1^2)}^{\alpha_1} {\delta(z_2^2)}^{\alpha_2}}{{4}^{\alpha_1+\alpha_2}}
  \in \cJ_{2\abs{\balpha}} \subset \cI_{2\abs{\balpha}}.
\end{equation}

Besides the degrees of polynomial generation and reproduction, we are also interested in the \emph{support of a symbol}, and we frequently use the graphical notation
\[
  \support[2m+1]{2n+1}{l}
\]
to denote that a scheme is supported on the octagon
\[
  \{ \balpha :
      \abs{\alpha_1} \le m,
      \abs{\alpha_2} \le n,
      \abs{\alpha_1}+\abs{\alpha_2} \le m+n-l
  \},
\]
or rather on the rectangle $[-m,m]\times[-n,n]$, minus the triangular regions with side length $l$ in each corner. To keep the notation compact, we may omit the horizontal dimension if $m=n$. Following this convention, the support of the four-directional box-spline in~\eqref{eq:box-spline} is
\[
  \support[2(i+k)+1]{2(j+k)+1,}{k}
\]
and the support of the schemes in Examples~\ref{example:cubic-reproduction-scheme} and~\ref{example:cubic-reproduction-family} is
\[
  \support{7.}{2}
\]

%-------------------------------------------------------------------------

\section{Special cases}
Before introducing and analysing the complete family of four-directional bivariate pseudo-splines in Section~\ref{sec:pseudo-splines}, let us review some special cases and summarize their properties.

\subsection{Tensor product pseudo-splines}\label{sec:TP-pseudo-splines}
The simplest approach to constructing symbols of potential bivariate pseudo-splines is to consider the tensor product of univariate pseudo-splines.

\begin{proposition}\label{proposition:TP-pseudo-spline}
The bivariate symbols
\[
  \bar{a}^l_n (\bz) = u^l_n(z_1) u^l_n(z_2), \qquad
  0\le l<n
\]
generate polynomials up to degree $2n-1$ and reproduce polynomials up to degree $2l+1$.
\end{proposition}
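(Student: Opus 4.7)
The plan is to reduce the proposition directly to the univariate facts about $u^l_n$ recorded in \eqref{eq:univariate-pseudo-spline} and \eqref{eq:univariate-pseudo-spline-alt}, and then to verify the algebraic generation and reproduction conditions \eqref{eq:PG-condition} and \eqref{eq:PR-condition} using the product structure of $\bar{a}^l_n$. Since the symbol factors coordinatewise, any mixed partial derivative factors too: $(D^{\bk}\bar{a}^l_n)(\bz) = (u^l_n)^{(k_1)}(z_1)\,(u^l_n)^{(k_2)}(z_2)$. This makes all bivariate checks follow from knowing the orders of vanishing of $u^l_n$ at $z=\pm1$.

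First, I would extract the relevant univariate information. From \eqref{eq:univariate-pseudo-spline} the symbol $u^l_n(z)$ contains $\sigma(z)^n = (1+z)^{2n}/(4z)^n$ as a factor, hence $u^l_n$ has a zero of order $2n$ at $z=-1$, giving $(u^l_n)^{(k)}(-1)=0$ for $0\le k\le 2n-1$. From the alternative form \eqref{eq:univariate-pseudo-spline-alt}, $u^l_n(z)-2$ contains $\delta(z)^{l+1}=(-1)^{l+1}(1-z)^{2l+2}/(4z)^{l+1}$, so $u^l_n(1)=2$ and $(u^l_n)^{(k)}(1)=0$ for $1\le k\le 2l+1$. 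These are exactly the pieces needed for the bivariate verification.

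For polynomial generation up to degree $2n-1$, I would check \eqref{eq:PG-condition} pointwise on $E'=\{(-1,1),(1,-1),(-1,-1)\}$. At every such $\bz$ at least one coordinate $z_i$ equals $-1$, and for any $\bk$ with $|\bk|\le 2n-1$ the index $k_i\le|\bk|\le 2n-1$, so the $i$-th factor $(u^l_n)^{(k_i)}(-1)$ vanishes and the product is zero. For polynomial reproduction up to degree $2l+1$, I would check \eqref{eq:PR-condition} at $(1,1)$: if $0<|\bk|\le 2l+1$ then some $k_i>0$ with $k_i\le 2l+1$, so the corresponding univariate factor $(u^l_n)^{(k_i)}(1)$ is zero while the other factor is finite. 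No step is actually difficult; the only ``obstacle'' is the bookkeeping of orders of vanishing, which the two explicit forms \eqref{eq:univariate-pseudo-spline} and \eqref{eq:univariate-pseudo-spline-alt} make transparent. Alternatively, one could phrase the generation step through the decomposition \eqref{eq:PG-decomposition} by writing $\bar{a}^l_n(\bz)=4B_{n,n,0}(\bz)\,q(\bz)$ with $q$ a polynomial in $\delta(z_1),\delta(z_2)$ so that $\bar{a}^l_n\in\cI_{2n}$, but the direct derivative check is the shortest route.
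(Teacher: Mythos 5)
Your proof is correct, and it rests on the same two univariate representations~\eqref{eq:univariate-pseudo-spline} and~\eqref{eq:univariate-pseudo-spline-alt} that the paper uses, but the verification step is genuinely different. The paper never differentiates: it rewrites $\bar{a}^l_n$ once as $4\,{\bsigma(\bz)}^n$ times a polynomial in $\delta(z_1),\delta(z_2)$ (so that generation follows from the box-spline decomposition~\eqref{eq:PG-decomposition}, since ${\bsigma(\bz)}^n=B_{n,n,0}(\bz)$), and once as $4$ plus terms each carrying a factor ${\delta(z_1)}^{\alpha}{\delta(z_2)}^{\beta}$ with $\alpha+\beta\ge l+1$ (so that reproduction follows from~\eqref{eq:PR-decomposition}, with the auxiliary polynomials $v^l_n$ of~\eqref{eq:v^l_n}). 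You instead verify the derivative conditions~\eqref{eq:PG-condition} and~\eqref{eq:PR-condition} directly, using that mixed partials of a tensor product factor coordinatewise and reading off the orders of vanishing of $u^l_n$ at $z=\pm1$ and of $u^l_n-2$ at $z=1$; your bookkeeping of those orders ($2n$ at $-1$, $2l+2$ for $u^l_n-2$ at $1$) is accurate, and you correctly note the remaining factor is finite. The trade-off: your argument is more elementary and self-contained, but it exploits the tensor-product structure in an essential way and so does not serve as a template for the genuinely four-directional symbols treated later; the paper's decomposition route is exactly the machinery reused for Propositions~\ref{proposition:box-spline} and~\ref{theorem:PG-PR}, which is presumably why the authors prefer it even here. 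Your closing remark that one could alternatively invoke~\eqref{eq:PG-decomposition} via $\bar{a}^l_n=4B_{n,n,0}\,q$ is in fact precisely the paper's argument for the generation half.
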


\begin{proof}
By~\eqref{eq:univariate-pseudo-spline} we have
\[
  \bar{a}^l_n (\bz)
  = 4 {\bsigma(\bz)}^n \sum_{i=0}^l \sum_{j=0}^l \binom{n+i-1}{i} \binom{n+j-1}{j} {\delta(z_1)}^i {\delta(z_2)}^j,
\]
which explains the degree of polynomial generation by~\eqref{eq:PG-decomposition}, because ${\bsigma(\bz)}^n=B_{n,n,0}(\bz)$. The degree of polynomial reproduction follows from~\eqref{eq:PR-decomposition} once we use~\eqref{eq:univariate-pseudo-spline-alt} to obtain
\[
  \bar{a}^l_n (\bz)
  = 4 - 4 {\delta(z_1)}^{l+1} v^l_n(z_1)
      - 4 {\delta(z_1)}^{l+1} {\delta(z_2)}^{l+1} v^l_n(z_1) v^l_n(z_2)
      - 4 {\delta(z_2)}^{l+1} v^l_n(z_2),
\]
where
\begin{equation}\label{eq:v^l_n}
  v^l_n(z) = \sum_{i=1}^n \binom{n+l}{i+l} {\delta(z)}^{i-1}{\sigma(z)}^{n-i}.
\end{equation}
\end{proof}

\noindent
However, the support of these schemes is
\[
  \support{2n+2l+1,}{0}
\]
which is certainly not minimal as we will see in the following sections. This fact was already observed by Han and Jia~\cite{Han:1998:OIS} for the special case of interpolatory schemes with symbols $\bar{a}^{n-1}_n(\bz)$, and we revisit their idea of constructing minimally supported interpolatory schemes in Section~\ref{sec:interpolatory}.

%-------------------------------------------------------------------------

\subsection{Four directional box-splines}\label{sec:box-splines}
Let us now turn to the case of four-directional bivariate pseudo-splines with generation degree $2n-1$, $n\ge1$ and linear reproduction. These schemes turn out to be scaled box-splines and can be considered a four-directional bivariate analogue of univariate B-splines.

\begin{proposition}\label{proposition:box-spline}
The bivariate symbols
\begin{equation}\label{eq:a-box}
  \tilde{a}_n (\bz)
  = 4 B_{\ceil{n/2}, \ceil{n/2}, \floor{n/2}} (\bz)
  = 4 {\bsigma(\bz)}^{\ceil{n/2}} {\bgamma(\bz)}^{\floor{n/2}}, \qquad
  n \ge 1
\end{equation}
generate polynomials up to degree $2n-1$ and reproduce polynomials up to degree $1$.
\end{proposition}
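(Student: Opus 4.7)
The plan is to invoke the two sufficient conditions from~\eqref{eq:PG-decomposition} and~\eqref{eq:PR-decomposition} with $\tilde{a}_n$ in its product form, reducing the statement to one algebraic identity and a short expansion. First, I would check that the two expressions in~\eqref{eq:a-box} really agree; this amounts to verifying that the third box-spline factor equals $\bgamma$, i.e.\
\[
  \sigma(z_1)\sigma(z_2) - \delta(z_1)\delta(z_2) = \frac{(1+z_1z_2)(z_1+z_2)}{4z_1z_2},
\]
which follows from
\[
  (1+z_1)^2(1+z_2)^2 - (1-z_1)^2(1-z_2)^2 = 4(z_1+z_2)(1+z_1z_2),
\]
a one-line application of $A^2-B^2=(A-B)(A+B)$ with $A=(1+z_1)(1+z_2)$ and $B=(1-z_1)(1-z_2)$.

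For the generation claim, I would apply~\eqref{eq:PG-decomposition} directly with $N=1$ and $(i_1,j_1,k_1)=(\ceil{n/2},\ceil{n/2},\floor{n/2})$. Since
\[
  i_1+j_1+k_1-\max(i_1,j_1,k_1)=\ceil{n/2}+\floor{n/2}=n,
\]
it follows that $\tilde{a}_n\in\cI_{2n}$, so polynomials up to degree $2n-1$ are generated.

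For reproduction at degree $1$, I would exploit the identity $\sigma(z)=1-\delta(z)$, which is immediate from~\eqref{eq:sigma-delta} via $1+\tfrac{(1-z)^2}{4z}=\tfrac{(1+z)^2}{4z}$. This gives $\bsigma=(1-\delta(z_1))(1-\delta(z_2))$ and $\bgamma=\bsigma-\bdelta=1-\delta(z_1)-\delta(z_2)$. Substituting these into $\tilde{a}_n=4\bsigma^{\ceil{n/2}}\bgamma^{\floor{n/2}}$ and multiplying out, every term in the expansion except the pure constant $1$ carries a factor of $\delta(z_1)$ or $\delta(z_2)$; after subtracting the resulting constant term $4$ and grouping, one obtains a decomposition of the form~\eqref{eq:PR-decomposition} with $\dR=1$.

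The only mild obstacle is the bookkeeping in the last step: one must be confident that both $(1-\delta(z_1))(1-\delta(z_2))$ and $1-\delta(z_1)-\delta(z_2)$ differ from $1$ by an element of the ideal $\ideal{\delta(z_1),\delta(z_2)}$, so that non-negative integer powers and the product $\bsigma^{\ceil{n/2}}\bgamma^{\floor{n/2}}$ also differ from $1$ by such an element. This is immediate, but it is precisely what makes~\eqref{eq:PR-decomposition} with $\dR=1$ go through and so deserves to be stated explicitly.
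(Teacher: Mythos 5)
Your proof is correct. The polynomial-generation half is exactly the paper's argument: apply~\eqref{eq:PG-decomposition} with the single box-spline $B_{\ceil{n/2},\ceil{n/2},\floor{n/2}}$ and note $\ceil{n/2}+\floor{n/2}=n$, giving $\tilde{a}_n\in\cI_{2n}$. For the reproduction half you take a genuinely different route. The paper checks the derivative condition~\eqref{eq:PR-condition} directly: since $\sigma'(1)=\delta'(1)=0$, the first partial derivatives of $\bsigma$ and $\bgamma$ vanish at $(1,1)$, and the product rule then kills the first partials of $\tilde{a}_n=4\bsigma^{\ceil{n/2}}\bgamma^{\floor{n/2}}$ there. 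You instead produce the ideal-theoretic decomposition~\eqref{eq:PR-decomposition} from the identity $\sigma(z)+\delta(z)=1$, which gives $\bsigma=(1-\delta(z_1))(1-\delta(z_2))$ and the clean formula $\bgamma=1-\delta(z_1)-\delta(z_2)$; expanding the powers, every non-constant term carries a $\delta(z_1)$ or $\delta(z_2)$, so $\tilde{a}_n=4+\delta(z_1)c_1(\bz)+\delta(z_2)c_2(\bz)$, which is~\eqref{eq:PR-decomposition} with $\dR=1$. Both arguments rest on the same sufficient conditions from Section~2 and both implicitly verify $\tilde{a}_n(1,1)=4$ (you via the constant term, the paper via $\sigma(1)=1$, $\delta(1)=0$). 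The paper's version is shorter; yours is more uniform with the algebraic style used in Proposition~\ref{proposition:TP-pseudo-spline} and Example~\ref{example:cubic-reproduction-scheme}, and the observation $\bgamma=1-\delta(z_1)-\delta(z_2)$ is a nice structural fact not made explicit in the paper. Your preliminary check that $\bsigma-\bdelta$ equals the third box-spline factor in~\eqref{eq:box-spline} is also correct (the paper asserts this equality without proof).
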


\begin{proof}
Using the general dimension formula for box-splines~\eqref{eq:PG-decomposition} and noticing that $\ceil{n/2}\ge \floor{n/2}$ and $\ceil{n/2}+\floor{n/2}=n$, it is clear that $\tilde{a}_n\in\cI_{2n}$, which explains the degree of polynomial generation. To see the degree of polynomial reproduction, we observe that
\[
  \sigma'(z) = \frac{z^2-1}{4z^2},\qquad
  \delta'(z) = \frac{1-z^2}{4z^2},\qquad
\]
hence $\sigma'(1) = \delta'(1) = 0$, and it follows that the first partial derivatives of $\bsigma(\bz)$ and $\bgamma(\bz)$ are zero at $(1,1)$.
\end{proof}

\noindent
From the considerations at the end of Section~\ref{sec:notation} we conclude that the support of $\tilde{a}_n$ is
\[
  \support{2n+1,}{\floor{n/2}}
\]
which is smaller than the support of the tensor product symbol $\bar{a}_n^0$ with the same degrees of generation and reproduction. Our numerical experiments make us believe that this is actually the smallest possible support, but we do not have a proof.

%-------------------------------------------------------------------------

\subsection{Interpolatory schemes}\label{sec:interpolatory}
The four-directional bivariate generalization of the interpolatory $2n$-point schemes was first mentioned and analysed by Han and Jia~\cite{Han:1998:OIS}. They show that there exists a unique symmetric interpolatory scheme with generation degree $2n-1$ and minimal support
\begin{equation}\label{eq:Han-Jia-support}
  \support{4n-1,}{2n-2}
\end{equation}
but they do not give an explicit formula for the symbols of these schemes. We discovered that these symbols can be represented nicely in terms of the symbols of the univariate $2n$-point schemes.

\begin{proposition}\label{proposition:Han-Jia-schemes}
The bivariate symbols
\begin{equation}\label{eq:Han-Jia-scheme}
  \hat{a}_n (\bz) = \sum_{i=0}^{n-1} u_{n-i}^{n-i-1}  (z_1) u_{i+1}^i(z_2)
                  - \sum_{i=0}^{n-2} u_{n-i-1}^{n-i-2}(z_1) u_{i+1}^i(z_2), \qquad
  n \ge 1
\end{equation}
are interpolatory, generate and reproduce polynomials up to degree $2n-1$, and are minimally supported.
\end{proposition}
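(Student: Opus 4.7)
The plan is to verify each of the four claims in Proposition~\ref{proposition:Han-Jia-schemes} by direct algebraic manipulation, using two properties of the univariate symbol $u_k^{k-1}$: (i) it is interpolatory, i.e.\ $u_k^{k-1}(z)+u_k^{k-1}(-z)=2$; and (ii) it generates and reproduces polynomials of degree $2k-1$, so $u_k^{k-1}$ vanishes to order $2k$ at $z=-1$, $u_k^{k-1}(1)=2$, and $(u_k^{k-1})^{(r)}(1)=0$ for $1\le r\le 2k-1$. Symmetry of $\hat a_n$ under $z_1\leftrightarrow z_2$ follows from the substitutions $i\mapsto n-1-i$ in the first sum and $i\mapsto n-2-i$ in the second, which swap the two factors in each summand; symmetry in each $z_j\mapsto 1/z_j$ is inherited from the palindromicity of the univariate factors. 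Interpolation is then immediate, since each tensor-product summand is interpolatory, giving $\sum_{\epsilon\in\{\pm 1\}^2}\hat a_n(\epsilon_1 z_1,\epsilon_2 z_2)=4n-4(n-1)=4$.

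The core step is the polynomial generation condition $(D^\bk\hat a_n)(\bz)=0$ for $\bz\in E'$ and $|\bk|\le 2n-1$. At $\bz=(-1,-1)$ it is pointwise: each summand of the first sum has vanishing orders at $(-1,-1)$ totalling $2(n-i)+2(i+1)=2n+2$, and each summand of the second totals $2n$, both exceeding $|\bk|$. At $\bz=(-1,1)$ (and, by symmetry, $(1,-1)$) individual summands do not vanish, so I would reindex the two sums by $j=n-i$ and $j=n-i-1$, respectively, to expose the telescoping combination
\begin{multline*}
  (D^\bk\hat a_n)(-1,1) = (u_n^{n-1})^{(k_1)}(-1)\,(u_1^0)^{(k_2)}(1) \\
  {}+ \sum_{j=1}^{n-1}(u_j^{j-1})^{(k_1)}(-1)\bigl[(u_{n-j+1}^{n-j})^{(k_2)}(1)-(u_{n-j}^{n-j-1})^{(k_2)}(1)\bigr].
\end{multline*}
By (ii), the factor $(u_j^{j-1})^{(k_1)}(-1)$ is nonzero only for $k_1\ge 2j$, while the bracketed reproduction difference is nonzero only for $k_2\ge 2(n-j)$, together forcing $|\bk|\ge 2n$; the boundary term similarly needs $k_1\ge 2n$. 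Both contradict $|\bk|\le 2n-1$.

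An analogous telescoping at $(1,1)$ yields polynomial reproduction up to degree $2n-1$; the only extra observation needed is that in the subcase $k_1=0$ the boundary term and the telescoping sum collapse to $2\,(u_n^{n-1})^{(k_2)}(1)$, which vanishes for $1\le k_2\le 2n-1$ by~(ii). For the support claim, the $i$-th summand in the first sum is supported in the rectangle $[-(2(n-i)-1),2(n-i)-1]\times[-(2i+1),2i+1]$, which lies in the octagon $\{\,|\alpha_1|+|\alpha_2|\le 2n,\ |\alpha_j|\le 2n-1\,\}$ (the second sum has strictly smaller support), so $\hat a_n$ is contained in~\eqref{eq:Han-Jia-support}. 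Being symmetric, interpolatory, with generation degree $2n-1$ and support in this octagon, $\hat a_n$ is then identified as Han and Jia's uniquely determined minimally supported scheme~\cite{Han:1998:OIS}. The main obstacle is the cancellation at $(-1,1)$: it hinges on the two sums being offset by exactly one index, so that the telescoping pairs the maximal generation order $2k$ of $u_k^{k-1}$ at $-1$ with the maximal reproduction order $2k-1$ at $1$.
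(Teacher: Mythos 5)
Your proof is correct, but for the generation and reproduction degrees it takes a genuinely different, and more self-contained, route than the paper. The interpolation and support arguments coincide with the paper's (tensor products of interpolatory univariate symbols summing to $4$; rectangular supports whose union fills the octagon~\eqref{eq:Han-Jia-support}, with minimality taken from Han and Jia). For the remaining claims, however, the paper derives the reproduction degree from interpolation via \cite[Proposition~3.4]{Charina:2013:PRO} and defers the generation degree to Theorem~\ref{theorem:PG-PR} with $l=n-1$, which in turn rests on the lengthy identification $a_n^{n-1}=\hat a_n$ of Proposition~\ref{proposition:generalized-Han-Jia} and on ideal membership ($\tilde a_{n-i}\in\cI_{2n-2i}$, $b_n^i\in\cI_{2i}$). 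You instead reindex so that the two sums telescope into $u_n^{n-1}(z_1)u_1^0(z_2)+\sum_{j=1}^{n-1}u_j^{j-1}(z_1)\bigl[u_{n-j+1}^{n-j}(z_2)-u_{n-j}^{n-j-1}(z_2)\bigr]$ and verify \eqref{eq:PG-condition} and \eqref{eq:PR-condition} directly by counting vanishing orders: $u_j^{j-1}$ vanishes to order $2j$ at $z=-1$ and has vanishing derivatives of orders $1,\dots,2j-1$ at $z=1$, while the bracketed difference vanishes to order $2(n-j)$ at $z=1$, so every term of $D^{\bk}\hat a_n$ at a point of $E'$ or at $(1,1)$ forces $\abs{\bk}\ge 2n$ --- except the $k_1=0$ reproduction case, which you correctly collapse by telescoping to $2(u_n^{n-1})^{(k_2)}(1)=0$. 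This is essentially the same telescoping mechanism the paper exploits inside the proof of Proposition~\ref{proposition:generalized-Han-Jia} (via the quantities $e_n^l$), but applied directly to the derivative conditions, which makes Proposition~\ref{proposition:Han-Jia-schemes} independent of Section~\ref{sec:pseudo-splines}; the paper's detour buys the closed form $a_n^{n-1}=\hat a_n$, which it needs anyway to define the whole family. Your closing step for minimality --- containment of the support in the octagon plus Han and Jia's uniqueness of the symmetric interpolatory scheme with that support and generation degree --- is, if anything, a slightly cleaner way to finish than the paper's direct claim that the supports ``add up'' to the octagon.
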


\begin{proof}
The univariate $2n$-point schemes satisfy $u^{n-1}_n(z)+u^{n-1}_n(-z)=2$, because they are interpolatory~\cite{Dyn:2002:SSI}. Consequently,
\begin{multline*}
  \hat{a}_n(z_1,z_2) + \hat{a}_n(z_1,-z_2) + \hat{a}_n(-z_1,z_2) + \hat{a}_n(-z_1,-z_2)\\
  \begin{aligned}
    &= \sum_{i=0}^{n-1} u_{n-i}^{n-i-1}(z_1) \Bigl( u_{i+1}^i(z_2) + u_{i+1}^i(-z_2) \Bigr)
     + \sum_{i=0}^{n-1} u_{n-i}^{n-i-1}(-z_1) \Bigl( u_{i+1}^i(z_2) + u_{i+1}^i(-z_2) \Bigr)\\
    &\quad- \sum_{i=0}^{n-2} u_{n-i-1}^{n-i-2}(z_1) \Bigl( u_{i+1}^i(z_2) + u_{i+1}^i(-z_2) \Bigr)
     - \sum_{i=0}^{n-2} u_{n-i-1}^{n-i-2}(-z_1) \Bigl( u_{i+1}^i(z_2) + u_{i+1}^i(-z_2) \Bigr)\\
    &= 2 \sum_{i=0}^{n-1} \Bigl( u_{n-i}^{n-i-1}(z_1) + u_{n-i}^{n-i-1}(-z_1) \Bigr)
     - 2 \sum_{i=0}^{n-2} \Bigl( u_{n-i-1}^{n-i-2}(z_1) + u_{n-i-1}^{n-i-2}(-z_1) \Bigr)\\
    &= 4n - 4(n-1) = 4,
  \end{aligned}
\end{multline*}
which implies that the schemes $\hat{a}_n$ are interpolatory, too~\cite{Conti:2013:ACA}. Therefore, the degrees of polynomial generation and reproduction are the same~\cite[Proposition~3.4]{Charina:2013:PRO}, \rev{and the generation degree follows as a special case from Theorem~\ref{theorem:PG-PR} for $l=n-1$.} As for the support, we remember that $u^{n-1}_n$ is supported on $[-2n+1,2n-1]$. Hence, the supports of the symbols in the first sum in~\eqref{eq:Han-Jia-scheme} are rectangular and add up like
\[
  \support[4n-1]{3\qquad+}{0} \qquad\qquad\quad
  \support[4n-5]{7\qquad+\quad\cdots\quad+}{0} \qquad\qquad\qquad\qquad\quad
  \support[3]{4n-1\qquad=}{0} \qquad\qquad\qquad\qquad\quad
  \support[4n-1]{4n-1.}{2n-2}\qquad\phantom{,}
\]
Similarly, the supports of the symbols in the second sum add up to
\[
  \support[4n-5]{4n-5,}{2n-4}
\]
which is contained in the support form the first sum, so that the support of $\hat{a}_n$ matches the minimal support in~\eqref{eq:Han-Jia-support}.
\end{proof}

%-------------------------------------------------------------------------

\section{A family of \rev{symmetric} four-directional bivariate pseudo-splines}\label{sec:pseudo-splines}
We are now ready to propose a whole family of four-directional bivariate pseudo-splines.

\begin{definition}
For any $n\ge1$ and $0\le l<n$, let
\begin{equation}\label{eq:family}
  a_n^l(\bz) = \sum_{i=0}^l \tilde{a}_{n-i}(\bz) b_n^i(\bz),
  \qquad
  b_n^i(\bz) = \sum_{j=0}^i c_n^{(i,j)} {\bpi(\bz)}^{(i-j,j)},
\end{equation}
with \rev{$\tilde{a}_n(\bz)$ and $\bpi(\bz)^{\balpha}$ as in~\eqref{eq:a-box} and~\eqref{eq:bpi^balpha}, respectively, and} real coefficients
\begin{equation}\label{eq:coefficients}
  c_n^{(i,j)} = \sum_{k=0}^{\floor{\frac{i}{2}}}
                  \binom{\floor{\frac{n-i}{2}}+k-1}{k}
                  \binom{n+i-2j-1}{i-j-k}
                  \binom{n+2j-i-1}{j-k},
  \qquad 0 \le j \le i < n.
\end{equation}
\end{definition}

\noindent
This family generalizes the two special cases in Sections~\ref{sec:box-splines} and~\ref{sec:interpolatory} and gives four-directional bivariate subdivision schemes with generation degree $2n-1$ and reproduction degree $2l+1$.

\begin{proposition}\label{proposition:generalized-box}
The symbols in~\eqref{eq:family} generalize the symbols in~\eqref{eq:a-box}, because
\[
  a_n^0 (\bz) = \tilde{a}_n (\bz),
  \qquad n\ge1.
\]
\end{proposition}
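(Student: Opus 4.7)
The proof will be essentially a direct unwinding of the definition~\eqref{eq:family} at $l=0$; there is no analytical obstacle. The plan is as follows.

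First, I would specialise the outer sum in~\eqref{eq:family} to $l=0$, which collapses it to a single term, giving $a_n^0(\bz) = \tilde{a}_n(\bz)\,b_n^0(\bz)$. So the proposition reduces to the identity $b_n^0(\bz) = 1$.

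Next, I would unwind $b_n^0(\bz)$ using the inner sum in~\eqref{eq:family}: the index $j$ ranges only over $0\le j\le 0$, so $b_n^0(\bz) = c_n^{(0,0)}\,{\bpi(\bz)}^{(0,0)}$. By the definition of ${\bpi(\bz)}^{\balpha}$ in Section~\ref{sec:notation}, we have ${\bpi(\bz)}^{(0,0)}=1$, so it remains only to evaluate the coefficient $c_n^{(0,0)}$.

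Finally, I would plug $i=j=0$ into~\eqref{eq:coefficients}. The summation index $k$ runs from $0$ to $\floor{0/2}=0$, so only the $k=0$ term survives, giving
\[
  c_n^{(0,0)}
  = \binom{\floor{n/2}-1}{0}\binom{n-1}{0}\binom{n-1}{0}
  = 1.
\]
Therefore $b_n^0(\bz)=1$ and $a_n^0(\bz)=\tilde{a}_n(\bz)$. The only thing worth double-checking is that the convention for the binomial coefficient with possibly negative upper argument (which occurs when $n=1$, since then $\floor{n/2}-1=-1$) is the standard one, i.e.\ $\binom{-1}{0}=1$; under this convention the argument works uniformly for all $n\ge 1$.
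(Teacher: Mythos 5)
Your proposal is correct and is exactly the paper's argument, just spelled out in more detail: the paper's proof is the single observation that $c_n^{(0,0)}=1$ for $n\ge1$, which you verify carefully (including the $\binom{-1}{0}=1$ convention needed for $n=1$).
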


\begin{proof}
The statement follows immediately by noting that $c_n^{(0,0)} = 1$ for $n\ge1$.
\end{proof}

\begin{proposition}\label{proposition:generalized-Han-Jia}
The symbols in~\eqref{eq:family} generalize the symbols in~\eqref{eq:Han-Jia-scheme}, because
\[
  a_n^{n-1} (\bz) = \hat{a}_n (\bz),
  \qquad n\ge1.
\]
\end{proposition}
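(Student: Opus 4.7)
The plan is to invoke Han--Jia's uniqueness result~\cite{Han:1998:OIS}: $\hat{a}_n$ is the unique symmetric interpolatory four-directional bivariate symbol with generation degree $2n-1$ whose support lies in the octagon~\eqref{eq:Han-Jia-support}. It therefore suffices to show that $a_n^{n-1}$ shares these four properties.

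Three of the four are straightforward to check. Symmetry of $a_n^{n-1}$ follows from the identity $c_n^{(i,j)}=c_n^{(i,i-j)}$, which is apparent from swapping $j\leftrightarrow i-j$ in the last two binomial factors of~\eqref{eq:coefficients}, combined with the four-directional symmetry of each $\tilde{a}_{n-i}$ and of $\bpi(\bz)^{(i-j,j)}$. The generation degree is immediate from $\tilde{a}_{n-i}\in\cI_{2(n-i)}$ (by~\eqref{eq:PG-decomposition}) and $\bpi(\bz)^{(i-j,j)}\in\cI_{2i}$ (by~\eqref{eq:bpi^balpha}): every summand lies in $\cI_{2n}$, forcing $a_n^{n-1}\in\cI_{2n}$. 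For the support, a direct Minkowski-sum computation combining the octagonal support of $\tilde{a}_{n-i}$ with the rectangular support of $\bpi(\bz)^{(i-j,j)}$, maximized over $0\le j\le i\le n-1$, confirms that every summand is supported inside~\eqref{eq:Han-Jia-support}.

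The main obstacle is showing $a_n^{n-1}$ is interpolatory, i.e., $\sum_{\epsilon_1,\epsilon_2\in\{\pm1\}} a_n^{n-1}(\epsilon_1 z_1,\epsilon_2 z_2)=4$. Because $\sigma(z_k)\delta(z_k)$ is invariant under each sign substitution, so is every $b_n^i(\bz)$, and the identity reduces to
\[
  \sum_{i=0}^{n-1} b_n^i(\bz)\,T_{n-i}(\bz)=4,
  \qquad
  T_m(\bz):=\sum_{\epsilon_1,\epsilon_2\in\{\pm1\}}\tilde{a}_m(\epsilon_1 z_1,\epsilon_2 z_2).
\]
Introducing the auxiliary variables $R=\sigma(z_1)-\delta(z_1)$ and $S=\sigma(z_2)-\delta(z_2)$, one finds that $\bsigma$, $\bdelta$, $\bgamma$, and $\sigma(z_k)\delta(z_k)$ all take compact polynomial forms in $R,S$ (for instance $\bgamma=(R+S)/2$ and $\sigma(z_1)\delta(z_1)=(1-R^2)/4$), yielding a closed form for $T_m$ and turning the identity above into a polynomial equation in $R,S$. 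Matching the resulting binomial sums against the combinatorial definition~\eqref{eq:coefficients} of $c_n^{(i,j)}$ is the technical heart of the argument, and is expected to follow from Chu--Vandermonde convolution together with careful bookkeeping on the parity of $n-i$.
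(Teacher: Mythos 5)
Your overall strategy is legitimate and genuinely different from the paper's: the paper never invokes Han--Jia's uniqueness but instead proves the identity $a_n^{n-1}=\hat{a}_n$ head-on, first rewriting $\hat{a}_n$ from its definition~\eqref{eq:Han-Jia-scheme} as $4\bsigma\bigl(d_n^{n-1}+\bgamma\sum_{k=0}^{n-2}\bsigma^{n-2-k}d_n^k\bigr)$ via two telescoping identities for the univariate symbols ($u_n^l-u_n^{l-1}$ and $u_n^l-u_{n-1}^l$), and then reorganizing that expression until the coefficients $c_n^{(i,j)}$ of~\eqref{eq:coefficients} emerge. Your reduction of the problem to four properties (symmetry, generation degree, support, interpolation) plus uniqueness is sound in outline, and your verifications of symmetry, of $a_n^{n-1}\in\cI_{2n}$, and of the support containment are all correct; the change of variables $R=\sigma(z_1)-\delta(z_1)$, $S=\sigma(z_2)-\delta(z_2)$ is also a good idea, since $\sigma+\delta=1$ makes everything polynomial in $R,S$ and the sub-symbol sum simply extracts the part even in both variables.

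The genuine gap is that the one step you defer --- the identity $\sum_{i=0}^{n-1}b_n^i(\bz)\,T_{n-i}(\bz)=4$ --- is the entire content of the proposition. Symmetry, membership in $\cI_{2n}$, and the support bound hold for \emph{any} choice of the coefficients $c_n^{(i,j)}$ (indeed for the whole family $\check{a}^l_n$ discussed in Section~\ref{sec:conclusion}); only the interpolation condition pins down the specific triple-sum formula~\eqref{eq:coefficients}. Writing that this step ``is expected to follow from Chu--Vandermonde convolution together with careful bookkeeping'' does not discharge it: after extracting the even--even part of $\bsigma^{\ceil{m/2}}\bgamma^{\floor{m/2}}$ in $R,S$ and convolving with the triple sum defining $c_n^{(i,j)}$, you face a multi-index binomial identity at least as intricate as the computation the paper actually carries out, and you have not exhibited it even for a general pattern (it does check out for $n=1,2$, but that is not a proof). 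Until that identity is established, your argument shows only that $a_n^{n-1}$ is a symmetric symbol in $\cI_{2n}$ with the right support --- not that it is interpolatory, and hence not that Han--Jia uniqueness applies. I would either complete that combinatorial identity in full or switch to the paper's direct route, which sidesteps the need to verify interpolation of $a_n^{n-1}$ altogether by transforming the explicit formula for $\hat{a}_n$ into the formula for $a_n^{n-1}$.
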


\begin{proof}
The proof consists of two main steps. We start by showing that
\[
  \hat{a}_n(\bz) = 4 \bsigma (\bz)\biggl( d_n^{n-1}(\bz) + \bgamma(\bz) \sum_{k=0}^{n-2} \bsigma^{n-2-k}(\bz) d_n^k(\bz) \biggr),
\]
where
\begin{equation}\label{eq:d^l_n-new}
  d^l_n (\bz)
  = \sum_{j=0}^l \binom{n+l-2j-1}{l-j} \binom{n+2j-l-1}{j} {\bpi(\bz)}^{(l-j,j)}, \qquad
  0 \le l < n.
\end{equation}
To this purpose we first derive two helpful identities for the univariate pseudo-splines $u_n^l$. On the one hand, it follows directly from~\eqref{eq:univariate-pseudo-spline} that
\begin{equation}\label{eq:u-identity-1}
  u_n^l(z) - u_n^{l-1}(z) = 2 {\sigma(z)}^n \binom{n+l-1}{l} {\delta(z)}^l,
\end{equation}
and, on the other hand, we have
\begin{align}\label{eq:u-identity-2}\notag
  u_n^l(z) - u_{n-1}^l(z)
  &= 2 {\sigma(z)}^{n-1} \Biggl[ (1-\delta(z)) \sum_{i=0}^l \binom{n+i-1}{i} {\delta(z)}^i
                                             - \sum_{i=0}^l \binom{n+i-2}{i} {\delta(z)}^i
                         \Biggr]\\\notag
  &= 2 {\sigma(z)}^{n-1} \Biggl[ \sum_{i=0}^l \binom{n+i-1}{i} {\delta(z)}^i
                               - \sum_{i=1}^{l+1} \binom{n+i-2}{i-1} {\delta(z)}^i
                               - \sum_{i=0}^l \binom{n+i-2}{i} {\delta(z)}^i
                         \Biggr]\\\notag
  &= 2 {\sigma(z)}^{n-1} \Biggl[ \sum_{i=0}^l \binom{n+i-2}{i-1} {\delta(z)}^i
                               - \sum_{i=1}^{l+1} \binom{n+i-2}{i-1} {\delta(z)}^i
                         \Biggr]\\
  &= -2 {\sigma(z)}^{n-1} \binom{n+l-1}{l} {\delta(z)}^{l+1},
\end{align}
where both identities hold for $0\le l<n$, if we extend the definition of $u_n^l$ by letting
\[
  u_0^0(z) = 2
  \qquad\qquad\text{and}\qquad\qquad
  u_n^{-1}(z) = 0, \qquad n > 0.
\]
We then conclude from~\eqref{eq:u-identity-1} that
\[
  \sum_{j=0}^l \bigl[ u_{n-j}^{l-j}(z_1) - u_{n-j}^{l-j-1}(z_1) \bigr]
               \bigl[ u_{n-l+j}^{j}(z_2) - u_{n-l+j}^{j-1}(z_2) \bigr]
  = 4 {\bsigma(\bz)}^{n-l} d_n^l(\bz)
\]
and from~\eqref{eq:u-identity-2} that
\[
  \sum_{j=0}^{l-1} \bigl[ u_{n-j}^{l-1-j}(z_1) - u_{n-j-1}^{l-1-j}(z_1) \bigr]
                   \bigl[ u_{n-l+j+1}^{j}(z_2) - u_{n-l+j}^{j}    (z_2) \bigr]
  = 4 {\bsigma(\bz)}^{n-l} \bdelta(\bz) d_n^{l-1}(\bz).
\]
By letting
\[
  e_n^l(\bz) = \sum_{j=0}^l u_{n-j}^{l-j}(z_1) u_{n-l+j}^{j}(z_2)
\]
and omitting the argument $(\bz)$ for the sake of brevity, we get
\[
  4 \bsigma^{n-l} (d_n^l - \bdelta d_n^{l-1}) = e_n^l - e_{n-1}^{l-1} - e_n^{l-1} + e_{n-1}^{l-2}
\]
and further
\begin{align*}
  \hat{a}_n
  &= \sum_{k=0}^{n-1} (e_n^k - e_{n-1}^{k-1})
   - \sum_{k=0}^{n-2} (e_n^k - e_{n-1}^{k-1})\\
  &= 4 \bsigma^n
   + \sum_{k=1}^{n-1} (e_n^k - e_{n-1}^{k-1})
   - \sum_{k=1}^{n-1} (e_n^{k-1} - e_{n-1}^{k-2})\\
  &= 4 \bsigma^n
   + 4 \sum_{k=1}^{n-1} \bsigma^{n-k} (d_n^k - \bdelta d_n^{k-1})\\
  &= 4 \bsigma \biggl( d_n^{n-1} + \bgamma \sum_{k=0}^{n-2} \bsigma^{n-k-2} d_n^k \biggr).
\end{align*}

In the second step, we now prove that this representation of $\hat{a}_n$ is identical to the formula of $a_n^{n-1}$ in~\eqref{eq:family}. To this end, we first observe that
\begin{align*}
  \hat{a}_n - 4\bsigma d_n^{n-1}
  &= 4\bsigma\bgamma \sum_{i=0}^{n-2} \bsigma^i d_n^{n-2-i} \\
  &= 4\bsigma\bgamma \sum_{i=0}^{n-2} \bsigma^{\ceil{\frac{i}2}}
                                      (\bgamma + \bdelta)^{\floor{\frac{i}2}} d_n^{n-2-i}\\
  &= 4\bsigma\bgamma \sum_{i=0}^{n-2} \bsigma^{\ceil{\frac{i}2}}
      \sum_{k=0}^{\floor{\frac{i}2}} \binom{\floor{\frac{i}2}}{k} \bgamma^{\floor{\frac{i}2}-k} \bdelta^k
      d_n^{n-2-i}\\
  &= 4\sum_{i=0}^{n-2} \sum_{k=0}^{\floor{\frac{i}2}} \binom{\floor{\frac{i}2}}{k}
       \bsigma^{\ceil{\frac{i}2}-k+1} \bgamma^{\floor{\frac{i}2}-k+1} \bpi^{(k,k)}
       d_n^{n-2-i}.
\end{align*}
We now rearrange the summation order, substitute $(i,k)$ with $(i+2k,k)$, and use the fact that
$\tilde{a}_{i+2}=4\bsigma^{\ceil{\frac{i}{2}}+1}\bgamma^{\floor{\frac{i}{2}}+1}$ to get
\begin{align*}
  \hat{a}_n - 4\bsigma d_n^{n-1}
  &= \sum_{i=0}^{n-2} \tilde{a}_{i+2} \sum_{k=0}^{\floor{\frac{n-i}2}-1}
       \binom{\floor{\frac{i}2}+k}{k} \bpi^{(k,k)} d_n^{n-2-i-2k}\\
  &= \sum_{i=0}^{n-2} \tilde{a}_{n-i} \sum_{k=0}^{\floor{\frac{i}2}}
       \binom{\floor{\frac{n-i}2}+k-1}{k} \bpi^{(k,k)} d_n^{i-2k}.
\end{align*}
Substituting $d_n^{i-2k}$ according to~\eqref{eq:d^l_n-new}, we then have
\begin{align*}
  \hat{a}_n - 4\bsigma d_n^{n-1}
  &= \sum_{i=0}^{n-2} \tilde{a}_{n-i} \sum_{k=0}^{\floor{\frac{i}2}} \binom{\floor{\frac{n-i}2}+k-1}{k}
       \sum_{j=0}^{i-2k} \binom{n+i-2k-2j-1}{i-2k-j} \binom{n+2j-i+2k-1}{j} \bpi^{(i-k-j,j+k)}\\
  &= \sum_{i=0}^{n-2} \tilde{a}_{n-i} \sum_{k=0}^{\floor{\frac{i}2}} \binom{\floor{\frac{n-i}2}+k-1}{k}
       \sum_{j=k}^{i-k} \binom{n+i-2j-1}{i-j-k} \binom{n+2j-i-1}{j-k} \bpi^{(i-j,j)},
\end{align*}
and noticing that the last sum does not change if we let $j$ range from $0$ to $i$, because the first binomial coefficient vanishes for $j>i-k$ and the second vanishes for $j<k$, we conclude that
\[
  \hat{a}_n - 4\bsigma d_n^{n-1}
  = \sum_{i=0}^{n-2} \tilde{a}_{n-i} \sum_{j=0}^i c_n^{(i,j)} \bpi^{(i-j,j)}.
\]
To complete the proof, we finally observe that
\[
  4\bsigma d_n^{n-1}
  = \tilde{a}_1 \sum_{j=1}^{n-1} \binom{2n-2-2j}{n-1-j} \binom{2j}{j} \bpi^{(n-1-j,j)}
  = \tilde{a}_1 \sum_{j=1}^{n-1} c_n^{(n-1,j)} \bpi^{(n-1-j,j)},
\]
because for $i=n-1$, the first binomial coefficient in~\eqref{eq:coefficients} is $1$ for $k=0$ and $0$ for $k>0$.
\end{proof}

\begin{theorem}\label{theorem:PG-PR}
The symbols in~\eqref{eq:family} \rev{are symmetric}, generate polynomials up to degree $2n-1$, reproduce polynomials up to degree $2l+1$, and the symbols corresponding to $l=n-1$ are interpolatory.
\end{theorem}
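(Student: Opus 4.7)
The plan is to dispatch the four claims using the ideal-theoretic language of Section~\ref{sec:preliminaries} and the identification $a_n^{n-1}=\hat{a}_n$ provided by Proposition~\ref{proposition:generalized-Han-Jia}. Symmetry is the easiest: the building blocks $\bsigma(\bz)$, $\bgamma(\bz)$, and each $\sigma(z_k)\delta(z_k)$ are invariant under $z_k\mapsto 1/z_k$, and the swap $z_1\leftrightarrow z_2$ sends $\bpi(\bz)^{(i-j,j)}$ to $\bpi(\bz)^{(j,i-j)}$, so symmetry of $a_n^l$ reduces to the coefficient identity $c_n^{(i,j)}=c_n^{(i,i-j)}$. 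This follows at once from~\eqref{eq:coefficients} under the substitution $j\mapsto i-j$: the last two binomials exchange their roles while the first factor is untouched.

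For generation, I would apply the decomposition~\eqref{eq:PG-decomposition}. Proposition~\ref{proposition:box-spline} gives $\tilde{a}_{n-i}\in\cI_{2(n-i)}$, and~\eqref{eq:bpi^balpha} gives $\bpi(\bz)^{(i-j,j)}\in\cJ_{2i}\subset\cI_{2i}$, hence $b_n^i\in\cI_{2i}$. The multiplicativity $\cI_k\cdot\cI_l\subset\cI_{k+l}$ recorded in Section~\ref{sec:preliminaries} then places every summand $\tilde{a}_{n-i}(\bz)\,b_n^i(\bz)$ in $\cI_{2n}$, so $a_n^l\in\cI_{2n}$ and the scheme generates polynomials up to degree $2n-1$. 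For the starting point $l=n-1$ of the reproduction argument, Proposition~\ref{proposition:generalized-Han-Jia} identifies $a_n^{n-1}$ with $\hat{a}_n$, and the direct check in Proposition~\ref{proposition:Han-Jia-schemes} (independent of the present theorem) shows that $\hat{a}_n$ is interpolatory; this simultaneously establishes the interpolatory claim. Since interpolatory convergent schemes have equal generation and reproduction degrees~\cite[Proposition~3.4]{Charina:2013:PRO}, $a_n^{n-1}$ then reproduces polynomials up to degree $2n-1$.

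For reproduction at general $l<n-1$, the plan is to bootstrap from the $l=n-1$ case via the telescoping identity
\[
  a_n^l(\bz) = a_n^{n-1}(\bz) - \sum_{i=l+1}^{n-1}\tilde{a}_{n-i}(\bz)\,b_n^i(\bz).
\]
Applying~\eqref{eq:PR-decomposition} to $a_n^{n-1}$ with $\dR=n$ writes $a_n^{n-1}-4$ as an element of the ideal $\bigideal{\delta(z_1)^\alpha\delta(z_2)^\beta:\alpha+\beta\ge n}$, which, since $n\ge l+1$, is contained in the analogous ideal with $\alpha+\beta\ge l+1$. Each subtracted summand contains the factor $\bpi(\bz)^{(i-j,j)}=\sigma(z_1)^{i-j}\sigma(z_2)^j\delta(z_1)^{i-j}\delta(z_2)^j$ with $(i-j)+j=i\ge l+1$, and therefore also lies in that ideal; hence $a_n^l-4$ fits the form of~\eqref{eq:PR-decomposition} with $\dR=l+1$, yielding reproduction degree $2l+1$. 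The main obstacle in this plan is precisely the ideal-theoretic bookkeeping of this last step: one must verify that the ``heavy'' contribution from $a_n^{n-1}-4$ and the ``light'' contributions from the subtracted sum both lie in the single $\delta$-ideal required by~\eqref{eq:PR-decomposition} for $\dR=l+1$. The telescoping identity isolates this issue cleanly, but careful tracking of the index ranges is needed.
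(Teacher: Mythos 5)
Your treatment of symmetry, polynomial generation, and the interpolatory claim is correct and coincides with the paper's own argument: symmetry reduces to $c_n^{(i,j)}=c_n^{(i,i-j)}$ (which does follow from~\eqref{eq:coefficients} by the substitution $j\mapsto i-j$, swapping the last two binomials), generation follows from $\tilde{a}_{n-i}\in\cI_{2(n-i)}$, $b_n^i\in\cI_{2i}$ and multiplicativity, and interpolation comes from Propositions~\ref{proposition:Han-Jia-schemes} and~\ref{proposition:generalized-Han-Jia}. Your telescoping identity $a_n^l=a_n^{n-1}-\sum_{i=l+1}^{n-1}\tilde{a}_{n-i}b_n^i$ is also valid and is essentially the paper's downward recursion $a_n^{l-1}=a_n^l-a_{n-l}^0b_n^l$ unrolled.

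The gap is in how you handle the contribution of $a_n^{n-1}-4$. You assert that, because $a_n^{n-1}$ reproduces polynomials up to degree $2n-1$, equation~\eqref{eq:PR-decomposition} ``writes'' $a_n^{n-1}-4$ as an element of the ideal $\bigideal{\delta(z_1)^{\alpha}\delta(z_2)^{\beta}:\alpha+\beta\ge n}$. But~\eqref{eq:PR-decomposition} is only a \emph{sufficient} condition for reproduction; the paper nowhere establishes the converse, and membership in that ideal is genuinely stronger than what reproduction gives you, namely the vanishing of the derivatives $(D^{\bk}a_n^{n-1})(1,1)$ for $0<\abs{\bk}\le 2n-1$ as in~\eqref{eq:PR-condition}. (Already at order two, a polynomial vanishing to second order at $(1,1)$ need not lie in $\bigideal{(1-z_1)^2,(1-z_2)^2}$.) Producing an explicit $\delta$-decomposition of $\hat{a}_n-4$ would require a separate computation that you have not supplied. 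The repair is straightforward and brings you back to the paper's route: work throughout with the derivative conditions~\eqref{eq:PR-condition} rather than the ideal. From reproduction you get $D^{\balpha}a_n^{n-1}(1,1)=0$ for $0<\abs{\balpha}<2n$; for each subtracted term, the factor ${\bpi(\bz)}^{(i-j,j)}$ with $i\ge l+1$ forces (via the Leibniz rule, since $D^{\bbeta}{\bpi(\bz)}^{(i-j,j)}$ at $(1,1)$ is nonzero only when $\beta_1\ge 2(i-j)$ and $\beta_2\ge 2j$) all derivatives of order less than $2i\ge 2l+2$ of $\tilde{a}_{n-i}b_n^i$ to vanish at $(1,1)$. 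Linearity then gives $D^{\balpha}a_n^l(1,1)=0$ for $0<\abs{\balpha}\le 2l+1$, and together with $a_n^l(1,1)=4$ this yields reproduction degree $2l+1$ by~\eqref{eq:PR-condition}. This is precisely the paper's inductive Leibniz argument, so once the ideal-theoretic shortcut is replaced by the derivative bookkeeping, your proof is complete.
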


\begin{proof}
\rev{We first observe that $\bsigma$, $\bdelta$, and $\bgamma$, and therefore $\tilde{a}_n$ are symmetric. Moreover, since
\[
  {\bpi(z_1,z_2)}^{(\alpha_1,\alpha_2)} = {\bpi(z_2,z_1)}^{(\alpha_2,\alpha_1)}
\]
and
\[
  c_n^{(i,j)} = c_n^{(i,i-j)},
  \qquad j=0,\dots,i,
\]
we conclude that $b_n^i$ is symmetric, hence also $a_n^l$.}
Next, we note that $a_n^{n-1}$ is interpolatory by Propositions~\ref{proposition:Han-Jia-schemes} and~\ref{proposition:generalized-Han-Jia}. Regarding the generation degree, we know from the proof of Proposition~\ref{proposition:box-spline} that $\tilde{a}_{n-i}\in\cI_{2n-2i}$ and it follows from~\eqref{eq:bpi^balpha} that $b_n^i\in\cI_{2i}$, because ${\bpi(\bz)}^{(i-j,j)}\in\cJ_{2i}\subset\cI_{2i}$, for $j=0,\dots,i$. Altogether, we thus get $a_n^l\in\cI_{2n}$. To prove the reproduction degree, we first note that $a_n^{n-1}$ reproduces polynomials up to degree $2n-1$, because the degrees of generation and reproduction coincide for interpolatory schemes~\cite{Charina:2013:PRO}. Hence,
\[
  D^{\balpha} a_n^{n-1}(1,1) = 0,
  \qquad 0 < \abs{\balpha} < 2n.
\]
Then, using the recursion
\[
  a_n^{l-1}(\bz) = a_n^l(\bz) - a_{n-l}^0(\bz) b_n^l(\bz),
  \qquad 0 < l < n,
\]
which follows directly from~\eqref{eq:family}, we conclude by induction that
\begin{align*}
  D^{\balpha} a_n^{l-1}(1,1)
  &= D^{\balpha} a_n^l(1,1) - D^{\balpha} \bigl( a_{n-l}^0(1,1) b_n^l(1,1) \bigr)\\
  &= D^{\balpha} a_n^l(1,1)
     - \sum_{\bbeta \le \balpha} D^{\balpha-\bbeta} a_{n-l}^0(1,1) D^{\bbeta} b_n^l(1,1)
   = 0,
  \qquad 0<\abs{\balpha}<2l,
\end{align*}
because
\[
  D^{\bbeta} b_n^l(1,1) = 0,  \quad  \hbox{for} \qquad 0<\abs{\bbeta}<2l,
\]
In fact, since the $\beta$-th derivative of ${(\sigma(z)\delta(z))}^l$ vanishes at $z=1$ for $\beta\le 2l-1$, we see that
\[
  D^{\bbeta} \bpi(\bz)^{(l-j,j)} \big|_{\bz=(1,1)} = 0
  \qquad\text{for}\quad \beta_1\le 2(l-j)-1 \quad\text{or}\quad \beta_2\le 2j-1.
\]
Therefore,
\[
  D^{\bbeta} b_n^l(\bz)
  = \sum_{j=0}^l c_n^{(l,j)} D^{\bbeta} {\bpi(\bz)}^{(l-j,j)}
\]
can be different from zero at $\bz=(1,1)$ only if $\beta_1\ge 2(l-j)$ and $\beta_2\ge 2j$, that is, for $\abs{\bbeta}\ge 2l$.
\end{proof}

\begin{table}[t!]\centering\small\renewcommand*\arraystretch{1.3}
  \def\vs{\setlength{\unitlength}{2.5ex}\parbox[c]{0pt}{\begin{picture}(0,4)(0,0)\end{picture}}}
  \begin{tabular}{r|r||ccccc}
    \multicolumn{2}{c||}{} & \multicolumn{5}{c}{$l$}\\\cline{3-7}
    \multicolumn{2}{c||}{} & 0 & 1 & 2 & 3 & 4\\\hline\hline
    & 1\vs & \support{ 3}{0} \\
    & 2\vs & \support{ 5}{1} & \support{ 7}{2} \\
$n$ & 3\vs & \support{ 7}{1} & \support{ 9}{3} & \support{11}{4}\\
    & 4\vs & \support{ 9}{2} & \support{11}{3} & \support{13}{5} & \support{15}{6}\\
    & 5\vs & \support{11}{2} & \support{13}{4} & \support{15}{5} & \support{17}{7} & \support{19}{8}\\
  \end{tabular}
  \caption{Support of the pseudo-splines $a^l_n$ with generation degree $2n-1$ and reproduction degree $2l+1$.}
  \label{table:support-a}
\end{table}

\begin{proposition}\label{proposition:support}
The support of the symbols in~\eqref{eq:family} is
\[
  \support{2(n+l)+1,\qquad\qquad0\le l < n.}{n+l-\bigceil{\tfrac{n-l}{2}}}
\]
\end{proposition}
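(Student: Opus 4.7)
The plan is to analyse each summand $\tilde{a}_{n-i}\, b_n^i$ of~\eqref{eq:family} individually, show that its support is an octagon contained in the claimed octagon $S(n,l)$, and verify that the vertex structure of $S(n,l)$ is already attained by the single summand $i=l$ without cancellation from the others.

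First I would identify the supports of the two factors. From Section~\ref{sec:box-splines} the support of $\tilde{a}_{n-i}$ is the octagon $\{\balpha : |\alpha_1|,|\alpha_2|\le n-i,\ |\alpha_1|+|\alpha_2|\le\lceil 3(n-i)/2\rceil\}$. For $b_n^i$, each $\bpi(\bz)^{(i-j,j)}=\delta(z_1^2)^{i-j}\delta(z_2^2)^j/4^i$ is supported on the rectangle $[-2(i-j),2(i-j)]\times[-2j,2j]$, so the convex hull of the union over $j=0,\dots,i$ is the diamond $|\alpha_1|+|\alpha_2|\le 2i$. The extremal points $(\pm 2i,0)$ and $(0,\pm 2i)$ are attained only by $j=0$ and $j=i$, and the corresponding coefficient $c_n^{(i,0)}=c_n^{(i,i)}=\binom{n+i-1}{i}$ is non-zero because in~\eqref{eq:coefficients} the binomial $\binom{n-i-1}{-k}$ forces $k=0$. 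The Minkowski sum of the two factor supports is therefore the octagon
\[
  \Omega_i = \{\balpha : |\alpha_1|,|\alpha_2|\le n+i,\ |\alpha_1|+|\alpha_2|\le n+i+\lceil(n-i)/2\rceil\},
\]
and a short computation shows that $\Omega_l=S(n,l)$.

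To deduce $\operatorname{supp}(a_n^l)\subseteq S(n,l)$ I need $\Omega_i\subseteq\Omega_l$ for all $i\le l$. This reduces to the monotonicity of $f(i):=i+\lceil(n-i)/2\rceil$, which a parity split on $n-i$ gives since $f(i+1)-f(i)\in\{0,1\}$. For the reverse inclusion, fix the vertex $v=(n+l,\lceil(n-l)/2\rceil)$ of $S(n,l)$. Because $|\alpha_1|\le n+i<n+l$ whenever $i<l$, only the summand $i=l$ contributes to $v$. Its Minkowski decomposition $v=(n-l,\lceil(n-l)/2\rceil)+(2l,0)$ is unique ($\beta_1\le n-l$ and $\gamma_1\le 2l$ force $\beta_1=n-l$, $\gamma_1=2l$, and the diamond then forces $\gamma_2=0$), so the coefficient of $v$ in $\tilde{a}_{n-l}\, b_n^l$ is the product of two corner coefficients: the one of $\tilde{a}_{n-l}$ at $(n-l,\lceil(n-l)/2\rceil)$, coming from the top $z_1$- and $z_2$-terms of $\bsigma$ and $\bgamma$, and the one of $b_n^l$ at $(2l,0)$, equal to $c_n^{(l,0)}(-1)^l/4^{2l}$; both are non-zero. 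By the symmetry of $a_n^l$ established in Theorem~\ref{theorem:PG-PR}, the eight images of $v$ under the reflections $z_k\mapsto 1/z_k$ and the swap $z_1\leftrightarrow z_2$ all lie in the support, and their convex hull is precisely $S(n,l)$.

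The delicate step is the non-cancellation at the boundary. The Minkowski containment and the monotonicity of $f$ are bookkeeping, whereas showing that the outermost vertices of $S(n,l)$ survive in the full sum relies on three ingredients that must cooperate: only the summand with $i=l$ reaches those vertices, their Minkowski decomposition is unique, and the corner coefficients are explicitly non-zero.
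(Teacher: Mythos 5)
Your proof is correct and follows essentially the same route as the paper: you identify the support of $\tilde{a}_{n-i}$ and of $b_n^i$, take their Minkowski sum to obtain the octagon $\Omega_i$, and observe that $\Omega_i\subseteq\Omega_l$ for $i\le l$. The one genuine addition is your non-cancellation argument at the vertex $(n+l,\lceil(n-l)/2\rceil)$ --- only the summand $i=l$ reaches it, its Minkowski decomposition is unique, and the two corner coefficients (including $c_n^{(l,0)}=\binom{n+l-1}{l}\neq 0$) are nonzero --- which, combined with symmetry, gives the reverse inclusion; since the support is defined as the convex hull of the nonzero coefficients, this step is needed for equality rather than mere containment, and the paper's proof passes over it silently, so your version is in fact the more complete one.
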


\begin{proof}
As in the proof of Proposition~\ref{proposition:Han-Jia-schemes}, we notice that the supports of the symbols in the sum of $b_n^i$ are rectangular and add up like
\[
  \support[4i+1]{1\qquad+}{0} \qquad\qquad\quad
  \support[4i-3]{5\qquad+\quad\cdots\quad+}{0} \qquad\qquad\qquad\qquad\quad
  \support[1]{4i+1\qquad=}{0} \qquad\qquad\qquad
  \support[4i+1]{4i+1}{2i}\qquad\phantom{,}
\]
to a diamond-shaped domain with vertices $(\pm2i,0)$ and $(0,\pm2i)$. In view of Proposition~\ref{proposition:box-spline}, the support of $\tilde{a}_{n-i}$ is
\[
  \support{2(n-i)+1.}{\bigfloor{\tfrac{n-i}{2}}}
\]
Hence, the support of $\tilde{a}_{n-i} b^i_n$ is
\[
  \support{2(n+i)+1,}{n+i-\bigceil{\tfrac{n-i}{2}}}
\]
which is contained in the support of $\tilde a^0_{n-l} b^l_n$ for $i\le l$.
\end{proof}

\noindent
Table~\ref{table:support-a} shows the support of the pseudo-splines in~\eqref{eq:family} for $1\le n \le 5$. Note that Proposition~\ref{proposition:support} does not actually show that these pseudo-splines are \emph{minimally} supported, but we believe they are, and we actually verified this numerically for $n\le 20$.

\begin{proposition}\label{proposition:necessary}
The symbols in~\eqref{eq:family} satisfy the necessary conditions for convergence,
\[
  a_n^l(1,1) = 4,
  \qquad\qquad
  a_n^l(\bz) =0, \quad \bz \in E',
  \qquad\qquad 0\le l<n.
\]
\end{proposition}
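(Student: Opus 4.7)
The proof is essentially a direct evaluation, so I would organize it around computing the factors $\sigma$ and $\delta$ at $z=\pm 1$ and then reading off the consequences for each building block in~\eqref{eq:family}. Specifically, $\sigma(1)=1$, $\sigma(-1)=0$, $\delta(1)=0$, $\delta(-1)=-1$, which immediately gives the four corner values of $\bsigma$, $\bdelta$, $\bgamma=\bsigma-\bdelta$, and $\bpi=\bsigma\bdelta$: at $(1,1)$ one has $(\bsigma,\bdelta,\bgamma,\bpi)=(1,0,1,0)$, at $(-1,-1)$ one has $(0,1,-1,0)$, and at the two mixed corners one has $\bsigma=\bpi=0$.

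From the closed form $\tilde{a}_n(\bz)=4\bsigma(\bz)^{\ceil{n/2}}\bgamma(\bz)^{\floor{n/2}}$ in~\eqref{eq:a-box} I would then read off, for every $n\ge 1$, that $\tilde{a}_n(1,1)=4$ and $\tilde{a}_n(\bz)=0$ for $\bz\in E'$: at $(-1,1)$ and $(1,-1)$ the factor $\bsigma^{\ceil{n/2}}$ kills it (using $\ceil{n/2}\ge 1$), while at $(-1,-1)$ it is again $\bsigma^{\ceil{n/2}}=0$ that does the job. For the factors $b_n^i(\bz)=\sum_{j=0}^i c_n^{(i,j)}\bpi(\bz)^{(i-j,j)}$ I would just note that $\bpi(1,1)=0$, so every monomial $\bpi^{(i-j,j)}$ with $(i-j)+j=i>0$ vanishes at $(1,1)$; consequently $b_n^i(1,1)=0$ for $i\ge 1$ and $b_n^0(1,1)=c_n^{(0,0)}=1$ (this last equality already being used in Proposition~\ref{proposition:generalized-box}).

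Assembling: since every summand in~\eqref{eq:family} has index $0\le i\le l<n$, the box-spline factor $\tilde{a}_{n-i}$ is always evaluated with parameter $n-i\ge 1$, so the observations above apply uniformly. At $\bz=(1,1)$ only the $i=0$ term survives and contributes $\tilde{a}_n(1,1)\cdot b_n^0(1,1)=4\cdot 1=4$. At $\bz\in E'$ every $\tilde{a}_{n-i}(\bz)$ vanishes, hence the whole sum vanishes, regardless of the values of $b_n^i$ there.

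There is no real obstacle; the only thing to watch is that the index range $0\le i\le l<n$ keeps $n-i\ge 1$, which is precisely what is needed to invoke the box-spline vanishing statement from Proposition~\ref{proposition:box-spline}. Everything else is immediate from the factorization and the elementary values of $\sigma$ and $\delta$ at $\pm 1$.
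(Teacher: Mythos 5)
Your proof is correct and follows essentially the same route as the paper: the box-spline factor $\tilde{a}_{n-i}$ supplies the value $4$ at $(1,1)$ and the vanishing on $E'$ (via $\sigma(-1)=0$), while $\bpi(1,1)=0$ kills every $b_n^i(1,1)$ with $i>0$, leaving only $b_n^0(1,1)=c_n^{(0,0)}=1$. One small slip: $\delta(-1)=1$, not $-1$, but this is harmless since your argument only uses $\bdelta(-1,-1)=\delta(-1)^2=1$ and the vanishing of $\bsigma$ on $E'$.
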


\begin{proof}
The proof relies on a key property of box splines, whose symbols satisfy
\[
  \tilde{a}_n(1,1) = 4,
  \qquad\qquad
  \tilde{a}_n(\bz) =0, \quad \bz \in E',
  \qquad\qquad 0\le l<n.
\]
From this property we conclude that
\[
  a_n^l(1,1) = 4\sum_{i=0}^l b_n^i(1,1),
  \qquad\qquad
  a_n^l(\bz)=0\quad \bz \in E',
  \qquad\qquad 0\le l<n,
\]
and using the fact that ${(\sigma(z)\delta(z))}^i$ vanishes at $z=1$ for $i>0$, we get
\[
  \sum_{i=0}^l b_n^i(1,1)=b_n^{0}(1,1)=c_n^{(0,0)}=1,\qquad 0\le l<n.
\]
\end{proof}

%-------------------------------------------------------------------------

\section{Examples}\label{sec:examples}
We now present some examples of the subdivision masks $A_n^l$ associated with the symbols $a_n^l$ in~\eqref{eq:family} for $1\le n\le 3$ and show the graphs of the corresponding basic limit functions obtained after three subdivision steps.

\begin{figure}[t!]
  \parbox{7cm}{\centering%
    $A_1^0 = \frac{1}{4}
    \begin{bmatrix}
    1 & 2 & 1\\
    2 & 4 & 2\\
    1 & 2 & 1
    \end{bmatrix}$}\hfill
  \parbox{8cm}{\centering\includegraphics{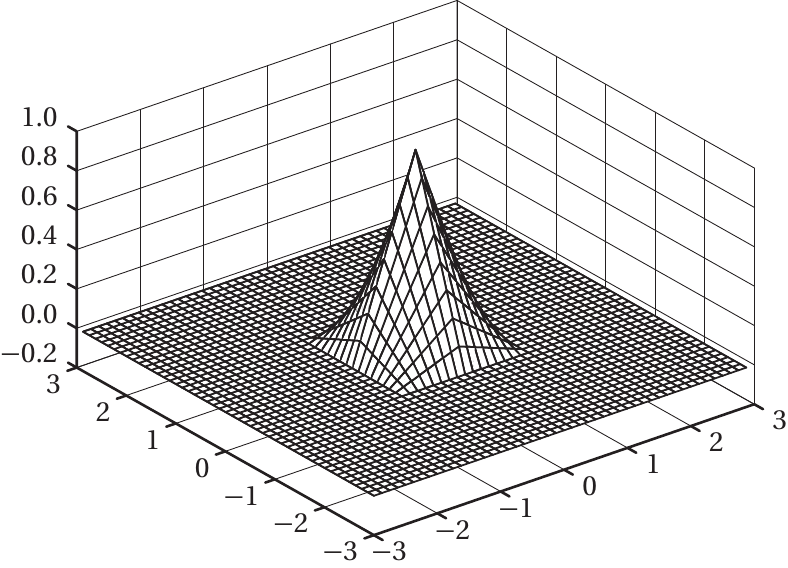}}
  \caption{Mask and graph of the basic limit function for the pseudo-spline with $n=1$.}
  \label{fig:example-n=1}
\end{figure}

\begin{figure}[t!]
  \parbox{7cm}{\centering\scalebox{0.85}{%
    $A_2^0 = \frac{1}{16}
    \begin{bmatrix}
    0 & 1 & 2 & 1 & 0\\
    1 & 4 & 6 & 4 & 1\\
    2 & 6 & 8 & 6 & 2\\
    1 & 4 & 6 & 4 & 1\\
    0 & 1 & 2 & 1 & 0
    \end{bmatrix}$}}\hfill
  \parbox{8cm}{\centering\includegraphics{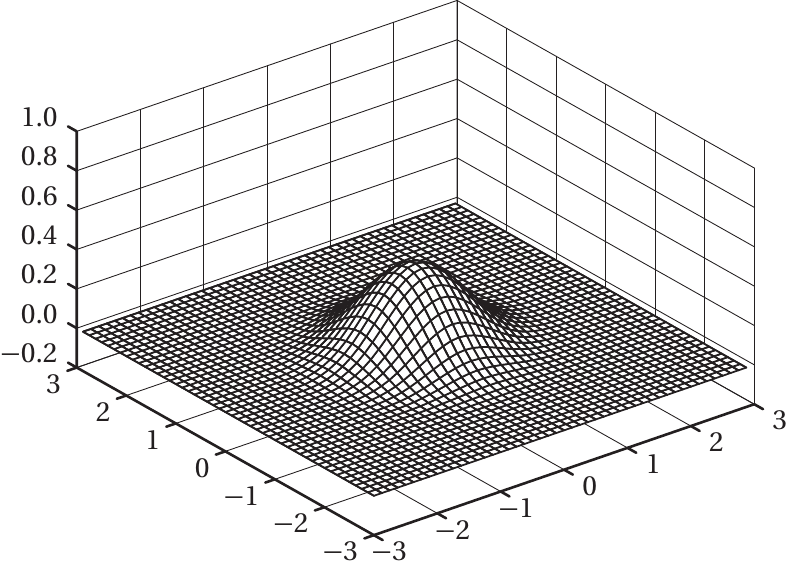}}\\[2ex]
  \parbox{7cm}{\centering\scalebox{0.85}{%
    $A_2^1 = \frac{1}{32}
    \begin{bmatrix}
     0 & 0 & -1 & -2 & -1 & 0 &  0\\
     0 & 0 &  0 &  0 &  0 & 0 &  0\\
    -1 & 0 & 10 & 18 & 10 & 0 & -1\\
    -2 & 0 & 18 & 32 & 18 & 0 & -2\\
    -1 & 0 & 10 & 18 & 10 & 0 & -1\\
     0 & 0 &  0 &  0 &  0 & 0 &  0\\
     0 & 0 & -1 & -2 & -1 & 0 &  0
    \end{bmatrix}$}}\hfill
  \parbox{8cm}{\centering\includegraphics{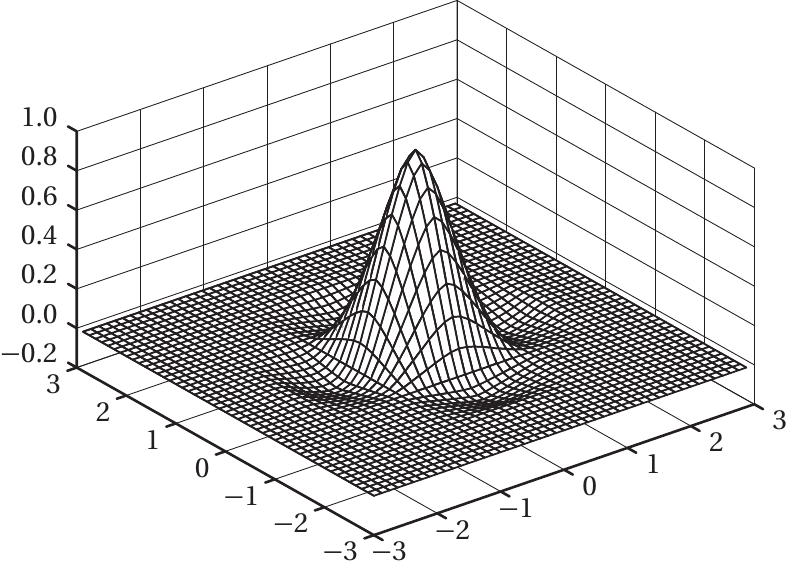}}\par
  \caption{Masks and graphs of the basic limit functions for the pseudo-splines with $n=2$.}
  \label{fig:example-n=2}
\end{figure}

For $n=1$, the only member of our pseudo-spline family is the tensor product pseudo-spline with symbol $a_1^0=\bar{a}_1^0=4B_{1,1,0}$ and polynomial generation and reproduction degrees $1$. It is an interpolatory scheme, and the limit function is piecewise bilinear (see Figure~\ref{fig:example-n=1}). For $n=2$, our family contains two members, which both generate polynomials up to degree~$3$ and reproduce polynomials up to degree $1$ and $3$, respectively. The first scheme is the four-directional box spline with symbol $a_2^0=\tilde{a}_2=4B_{1,1,1}$, and the second scheme with symbol $a_2^1$ is the four-directional bivariate analogue of the interpolatory 4-point scheme (see Figure~\ref{fig:example-n=2}). For $n=3$, our family contains the four-directional box spline with symbol $a_3^0=\tilde{a}_3=4B_{2,2,1}$ and the four-directional bivariate analogue of the interpolatory 6-point scheme with symbol $a_3^2$. Both schemes have polynomial generation degree $5$ and reproduction degrees $1$ and $5$, respectively. The third family member with symbol $a_3^1$ also generates polynomials up to degree $5$ and reproduces polynomials up to degree $3$. It fills the gap between the special cases $a_3^0$ and $a_3^2$ not only regarding the reproduction degree, but also regarding the support (see Table~\ref{table:support-a}) and the shape of the basic limit function (see Figure~\ref{fig:example-n=3}).

\begin{figure}[t!]
  \parbox{7cm}{\centering\scalebox{0.725}{%
    $A_3^0 = \frac{1}{256}
    \begin{bmatrix}
    0 &  1 &  4 &  6 &  4 &  1 & 0\\
    1 &  8 & 23 & 32 & 23 &  8 & 1\\
    4 & 23 & 56 & 74 & 56 & 23 & 4\\
    6 & 32 & 74 & 96 & 74 & 32 & 6\\
    4 & 23 & 56 & 74 & 56 & 23 & 4\\
    1 &  8 & 23 & 32 & 23 &  8 & 1\\
    0 &  1 &  4 &  6 &  4 &  1 & 0
    \end{bmatrix}$}}\hfill
  \parbox{8cm}{\centering\includegraphics{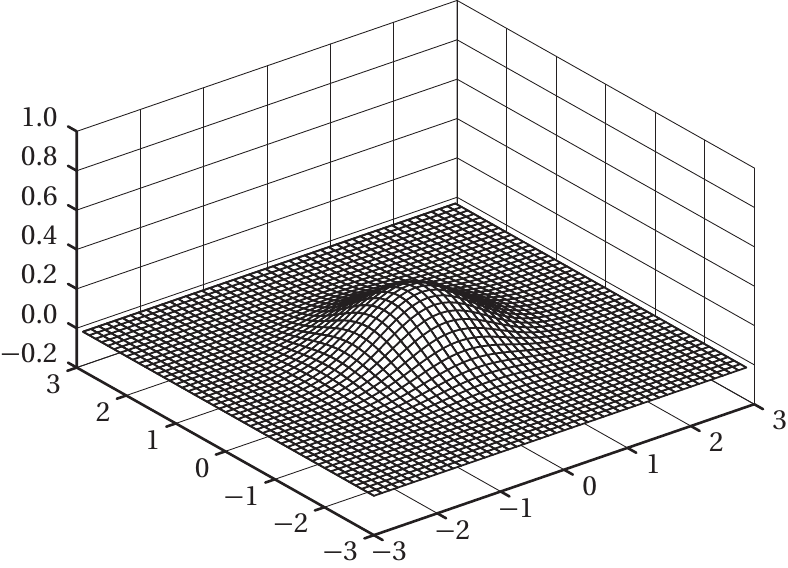}}\\[2ex]
  \parbox{7cm}{\centering\scalebox{0.725}{%
    $A_3^1 = \frac{1}{256}
    \begin{bmatrix}
     0 &  0  &  0 &  -3 & -6  &  -3 &  0 &  0  &  0\\
     0 &  0  & -2 &  -8 & -12 &  -8 & -2 &  0  &  0\\
     0 & -2  & -4 &  14 &  32 &  14 & -4 & -2  &  0\\
    -3 & -8  & 14 &  80 & 122 &  80 & 14 & -8  & -3\\
    -6 & -12 & 32 & 122 & 168 & 122 & 32 & -12 & -6\\
    -3 & -8  & 14 &  80 & 122 &  80 & 14 & -8  & -3\\
     0 & -2  & -4 &  14 &  32 &  14 & -4 & -2  &  0\\
     0 &  0  & -2 &  -8 & -12 &  -8 & -2 &  0  &  0\\
     0 &  0  &  0 &  -3 & -6  &  -3 &  0 &  0  &  0
    \end{bmatrix}$}}\hfill
  \parbox{8cm}{\centering\includegraphics{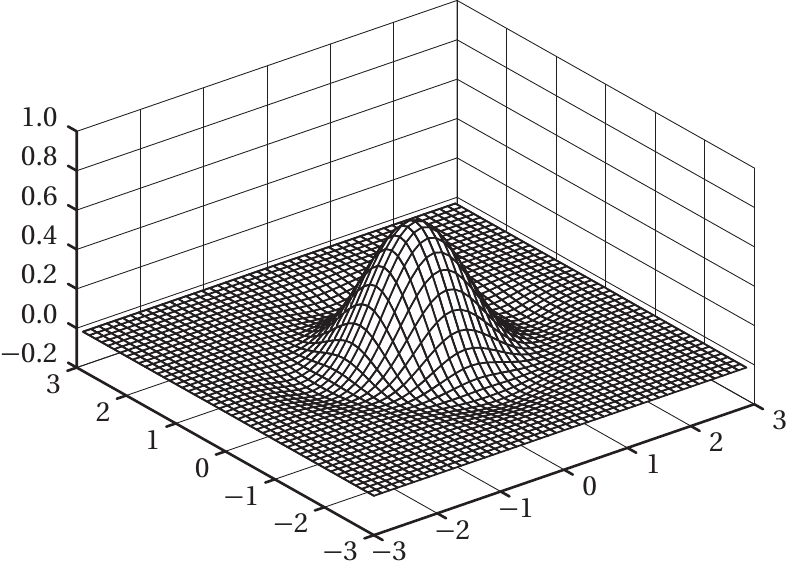}}\\[2ex]
  \parbox{7cm}{\centering\scalebox{0.725}{%
    $A_3^2 = \frac{1}{512}
    \begin{bmatrix}
    0 & 0 &  0  & 0 &  3  &  6  &  3  & 0 &  0  & 0 & 0\\
    0 & 0 &  0  & 0 &  0  &  0  &  0  & 0 &  0  & 0 & 0\\
    0 & 0 &  2  & 0 & -27 & -50 & -27 & 0 &  2  & 0 & 0\\
    0 & 0 &  0  & 0 &  0  &  0  &  0  & 0 &  0  & 0 & 0\\
    3 & 0 & -27 & 0 & 174 & 300 & 174 & 0 & -27 & 0 & 3\\
    6 & 6 & -50 & 0 & 300 & 512 & 300 & 0 & -50 & 6 & 6\\
    3 & 0 & -27 & 0 & 174 & 300 & 174 & 0 & -27 & 0 & 3\\
    0 & 0 &  0  & 0 &  0  &  0  &  0  & 0 &  0  & 0 & 0\\
    0 & 0 &  2  & 0 & -27 & -50 & -27 & 0 &  2  & 0 & 0\\
    0 & 0 &  0  & 0 &  0  &  0  &  0  & 0 &  0  & 0 & 0\\
    0 & 0 &  0  & 0 &  3  &  6  &  3  & 0 &  0  & 0 & 0
    \end{bmatrix}$}}\hfill
  \parbox{8cm}{\centering\includegraphics{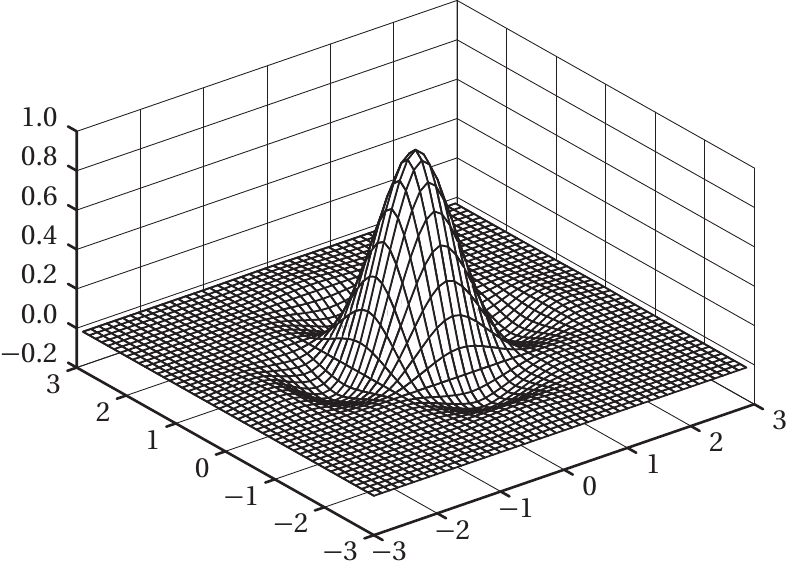}}\par
  \caption{Masks and graphs of the basic limit functions for the pseudo-splines with $n=3$.}
  \label{fig:example-n=3}
\end{figure}

The examples in Figures~\ref{fig:example-n=1}--\ref{fig:example-n=3}, as well as many numerical experiments performed for $0 \le l < n \le 20$, suggest that the subdivision schemes with symbols in~\eqref{eq:family} are convergent. However, a systematic and theoretical analysis of convergence, though very crucial, is beyond the scope of this paper, and so are further investigations regarding other properties such as stability and degree of smoothness of the basic limit functions.

%-------------------------------------------------------------------------

\section{Conclusion}\label{sec:conclusion}
To conclude this paper, we want to point out that the family of bivariate four-directional pseudo-splines in~\eqref{eq:family} is not unique. For example, if $n-l$ is odd, then the scheme with symbol
\[
  \check{a}^l_n(\bz)
  = a^l_n(\bz) + a^0_{n-l-1}(\bz) \sum_{j=1}^l \mu_j {\bpi(\bz)}^{(l+1-j,j)}
\]
for any set of weights $\mu_1,\dots,\mu_l\in\RR$ with $\mu_j=\mu_{l+1-j}$, $j=1,\dots,l$ is symmetric and has the same generation and reproduction degree and the same support as $a^l_n$. In fact, since $a^0_{n-l-1}\in\cI_{2n-2l-2}$ and ${\bpi(\bz)}^{(l+1-j,j)}\in\cJ_{2l+2}\subset\cI_{2l+2}$ for $j=1,\dots,l$, the difference $\check{a}^l_n-a^l_n$ is both in $\cI_{2n}$ and in $\cJ_{2l+2}$, which explains the degrees of generation and reproduction, and the statement about the support follows as in the proof of Proposition~\ref{proposition:support}. However, in the special case of $l=n-1$ the symbol $\check{a}_n^l$ is interpolatory only if all weights $\mu_j$ are zero, as in Remark~\ref{remark:non-unique}. Our numerical investigations further indicate that for $n-l$ even, the members of our family with symbols $a_n^l$ are the unique minimally supported schemes with generation degree $2n-1$ and reproduction degree $2l+1$.

%-------------------------------------------------------------------------

%\bibliographystyle{abbrv}
%\bibliography{jlong,subdivision}

\end{document}